\newtheorem*{rep@theorem}{\rep@title}
\newcommand{\newreptheorem}[2]{%
\newenvironment{rep#1}[1]{%
 \def\rep@title{#2 \ref{##1}}%
 \begin{rep@theorem}}%
 {\end{rep@theorem}}}
\def\namedlabel#1#2{\begingroup
	#2%
	\def\@currentlabel{#2}%
	\phantomsection\label{#1}\endgroup
}
\newtheorem{thm}{Theorem}[section]
\newtheorem{prop}[thm]{Proposition}
\newtheorem{lemma}[thm]{Lemma}
\newtheorem{corol}[thm]{Corollary}
\theoremstyle{definition}
\date{August, 2025}
\title{A subbase property for describing edge-end spaces}
\author{Lucas Real\footnote{Institute of Mathematics and Computer Sciences, University of S\~{a}o Paulo, S\~{a}o Paulo, Brazil}}
\begin{document}
\maketitle
\begin{abstract}
	
	In a previous joint work with Aurichi and Magalhães Jr., we showed that the topological spaces arising from the edge-end structure of infinite graphs define a proper subfamily of those obtained through the well-known (vertex-)ends. This result was later recovered by a more general approach due to Pitz, who also stated the problem of finding a purely topological characterization for the class of edge-end spaces. His question reads as an edge-related version of a similar conjecture posed by Diestel in 1992, but there regarding the usual end structure of infinite graphs and which was recently answered also by Pitz via the existence of a suitable clopen subbase. This paper shows how an extra intersection property can be combined with his solution in order to restrict it to the edge-end spaces, hence  stating a topological description for this later family as well.

\end{abstract}

\section{Introduction}
\paragraph{}
An end structure for a given infinite graph $G$ may be often  understood as a way to formalize the notion of ``directions'' within $G$ or its ``points at the infinity''. Nevertheless, the most usual definitions on that regarding require suitable identifications of the one-way infinite paths in $G$. More precisely, following the textbook \cite[Chapter 8]{diestellivro}, we say that a subgraph $r$ of $G$ defines a \textbf{ray} if its vertex set is a sequence $\{v_n\}_{n\in\mathbb{N}}\subseteq V(G)$ and its edge set is described as $\{v_nv_{n+1}:n\in\mathbb{N}\}$. In this case, we may write $r=v_0v_1v_2\dots$ and say that $v_0$ is the \textbf{starting vertex} of $r$. Besides that, the rays contained in $r$ itself are called now its tails, corresponding then to its infinite connected subgraphs.

Considering the above setting, the more well-known notion of ends is the one proposed by Halin in \cite{halin} and that identifies infinitely vertex-connected one-way infinite paths. Formally, we may say that two rays $r$ and $s$ in $G$ are \textbf{equivalent} if, and only if, $r$ and $s$ have tails in a same connected component of $G\setminus S$ for every finite set of vertices $S\subseteq V(G)$. Under this relation, the equivalence class of $r$ is denoted by $[r]$ and defined to be its \textbf{end}. In many discussions within infinite graph theory, especially regarding locally finite graphs (as surveyed by Diestel in \cite{survey}, for instance), the set $\Omega(G):=\{[r]: r\text{ is a ray of }G\}$ is often understood as a ``boundary of $G$ at the infinity'', whose elements play a similar role as the vertices of $G$ themselves.

However, a different approach to the end structure was later introduced by Hahn, Laviolette and \v{S}irá\v{n} in \cite{edgeends}, who proposed an identification of the infinitely edge-connected one-way infinite paths. Formally, two rays $r$ and $s$ of $G$ are now said to be \textbf{edge-equivalent} if, for every finite set of \textit{edges} $F\subseteq E(G)$, the tails of $r$ and $s$ in $G\setminus F$ lie on the same connected component. In particular, equivalent rays are easily seen to be also edge-equivalent, although the converse implication may not hold (see Figure 1 in \cite{edgeends}, for instance). Due to that, inspired by the previous notation, we now write $[r]_E$ the edge-equivalence class of $r$ and call it its \textbf{edge-end}. As exemplified by the works in \cite{edgeends,edgeconnectivity}, some edge-connectivity properties of $G$ can be highlighted when handling with $\Omega_E(G):=\{[r]_E: r\text{ is a ray of }G\}$.

Nevertheless, irrespective of considering $\Omega(G)$ or $\Omega_E(G)$, the contributions of both quotients to infinite graph theory is often supported by corresponding topological structures. In order to define them, fix a finite set of vertices $S\subseteq V(G)$, a finite set of edges $F\subseteq E(G)$ and a ray $r$ in the given graph $G$. By definition of the previous equivalence relations, the connected components of $G\setminus S$ and $G\setminus F$ in which $r$ has a tail are well-defined regardless of the end and the edge-end of $r$, respectively. Then, we may write $C(S,[r])$ for such connected component of $G\setminus S$, while $C_E(F,[r]_E)$ shall refer to the one in $G\setminus F$ containing the tail of $r$. Considering that, $\Omega(S,[r]):=\{[s]\in \Omega(G): C(S,[s]) = C(S,[r]) \}$ sets a basic open set around $[r]$ in a topology for $\Omega(G)$, which is then called the \textbf{end space} of $G$. Similarly, $\Omega_E(F,[r]_E):=\{[s]_E\in \Omega_E(G): C_E(F,[s]_E) = C_E(F,[r]_E)\}$ defines an open basic neighborhood around $[r]_E$ in a topology for $\Omega_E(G)$, now said to be the \textbf{edge-end space} of $G$.

Despite the combinatorial motivations for the study of $\Omega(G)$ and $\Omega_E(G)$, the topological approach to these structures also offer interesting problems from a set-theoretical viewpoint. As a well-known discussion on that regard, Diestel asked in \cite{diestel1992} for a characterization of the family $\Omega:=\{\Omega(G): G\text{ graph}\}$ into purely topological terms, which was achieved by Pitz in \cite{caracterizacao} only more than 30 years later. On the other hand, Aurichi, Magalhães Jr. and Real showed in \cite{compaulo} that every topological space arising from the edge-end structure of an infinite graph could also be obtained through the end space of a possible another one, but that the converse statement does not hold. Through a more general approach, this inclusion ``$\Omega_E\subsetneq \Omega$'' was also recently re-obtained by Pitz in his paper \cite{TreeCutPartition}, whose Problem 1 then asks for purely topological description of the (strictly) smaller class $\Omega_E:=\{\Omega_E(G): G\text{ graph}\}$. As our main result, which is strongly supported by the own characterization for the family $\Omega$ due to Pitz in \cite{caracterizacao}, we address this question through the subbase criteria below:       
  
  \begin{thm}\label{main}
  	A topological space $X$ is homeomorphic to the edge-end space of some graph if, and only if, it admits a clopen subbase $\mathcal{C}$ which is nested, noetherian, hereditarily complete and verifies the following \textbf{singleton intersection property}: \begin{center}
  		For every strictly $\subseteq-$decreasing chain $C_0\supsetneq C_1\supsetneq C_2\supsetneq C_3 \supsetneq \dots$ in $\mathcal{C}$, the intersection $\bigcap_{n\in\mathbb{N}}C_n$ contains an unique element.
  	\end{center} 
  \end{thm}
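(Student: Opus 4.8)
The plan is to leverage Pitz's existing characterization of the class $\Omega$, which (as the theorem's phrasing strongly suggests) states that a space $X$ is an end space if and only if it admits a clopen subbase that is nested, noetherian, and hereditarily complete. The strategy is therefore to show that the \emph{singleton intersection property} is precisely the extra ingredient that cuts the class $\Omega$ down to its subclass $\Omega_E$. Concretely, I would split the proof into the two implications and in each case try to reduce to Pitz's theorem rather than rebuild the end/edge-end correspondence from scratch.

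For the forward direction, suppose $X \cong \Omega_E(G)$ for some graph $G$. I would first recall that finite edge sets $F \subseteq E(G)$ induce the subbasic clopen sets $\Omega_E(F,[r]_E)$, and argue that the family $\mathcal{C}$ generated by these (or a suitable refinement) is a clopen subbase that is nested, noetherian, and hereditarily complete — this should follow because the inclusion $\Omega_E \subseteq \Omega$ already tells us edge-end spaces are end spaces, so Pitz's three conditions are inherited. The real content is to verify the singleton intersection property from the combinatorics of edge-ends. Here the key point is that a strictly decreasing chain $C_0 \supsetneq C_1 \supsetneq \dots$ of subbasic sets corresponds to a strictly nested sequence of edge-separators, and infinitely-edge-connectedness forces any two rays surviving all of them to be edge-equivalent; the heart of the matter is that \emph{edge}-separation (unlike vertex-separation) pins down a single edge-end in the limit, giving exactly one point in $\bigcap_n C_n$. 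I would isolate this as the conceptual difference between $\Omega_E$ and $\Omega$: vertex-separators can split one ``direction'' into several ends, producing non-singleton intersections, whereas edge-separators cannot.

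For the converse, assume $X$ carries a clopen subbase $\mathcal{C}$ with all four properties. By Pitz's characterization, the three classical properties already yield a graph $H$ with $X \cong \Omega(H)$. The task is then to modify $H$ (or reconstruct a new graph $G$) so that $X \cong \Omega_E(G)$, using the singleton intersection property to guarantee that the construction genuinely produces an edge-end space. I expect the cleanest route is to trace through Pitz's construction of $H$ from the subbase and observe that the singleton condition forces the separators built at each stage to be realizable by finite \emph{edge} cuts with the uniqueness of limits that edge-ends require; equivalently, one shows the singleton property makes the canonical map from $\mathcal{C}$-chains to ends injective on limits, which is exactly the obstruction to a space being an edge-end space identified in \cite{compaulo,TreeCutPartition}.

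The hard part will be the converse direction, and specifically showing that the singleton intersection property is \emph{sufficient} rather than merely necessary. Checking necessity (forward direction) is a direct, if careful, computation with edge-separators. But for sufficiency I must produce an actual graph whose edge-end space is $X$, and the danger is that Pitz's end-space graph $H$ realizes $X$ via vertex-ends in a way that does not transfer to any edge-end structure. I anticipate the decisive step is a lemma asserting that, under the singleton intersection property, the tree-like decomposition underlying $\mathcal{C}$ can be represented by a graph in which each relevant separation is achieved by removing finitely many edges (rather than vertices) while preserving the one-point-per-infinite-chain behaviour — thereby certifying that the resulting space lands in $\Omega_E$ and not merely in $\Omega \setminus \Omega_E$.
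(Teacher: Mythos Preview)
Your forward direction conflates two subbases. The family $\{\Omega_E(F,[r]_E)\}$ of basic open sets arising from finite edge cuts of $G$ is not nested in general (already for a star with three rays attached, two of the resulting cofinite regions overlap without containment), so you cannot verify all four axioms for it. You then appeal to $\Omega_E\subseteq\Omega$ to inherit an abstract nested, noetherian, hereditarily complete subbase $\mathcal{C}'$ from Pitz's theorem; but the elements of $\mathcal{C}'$ need not come from edge cuts of $G$, and hence your edge-combinatorics argument for the singleton intersection property (``two rays surviving all the edge-separators must be edge-equivalent'') no longer applies to $\mathcal{C}'$. The converse direction names a hope rather than a mechanism: the singleton property is a statement about chains in $\mathcal{C}$, and nothing in your outline explains how it converts vertex-separators of some end-space graph $H$ into edge-separators of a new graph $G$.

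The idea that closes both gaps, and which the paper uses, is an intermediate order-theoretic representation: edge-end spaces are exactly the ray spaces $\mathcal{R}(T)$ of order trees $T$ of height at most $\omega\cdot\omega$ whose branching rays carry only finitely many branching nodes (Proposition~\ref{MinhaRepresentacao}, established via Pitz's tree-cut Theorem~\ref{RepresentacaoEdgeEndsPitz} rather than by working inside the original graph). For such $T$ the canonical subbase $\{[t]:t\in T\}$ is automatically nested, and the height and branching constraints are precisely what force every strictly decreasing chain $[t_0]\supsetneq[t_1]\supsetneq\dots$ to single out a unique ray. Conversely, Theorem~\ref{caracterizacaoPitz} already produces a $\mathcal{C}$-tree $T$ with $X\cong\mathcal{R}(T)$; the singleton intersection property is then used to bound the height and the branching of $T$, after which one prunes $T$ down to a graph-theoretical tree and invokes Theorem~\ref{RepresentacaoEdgeEndsPitz} again. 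At no point is a generic end-space graph modified directly.
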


The proof of Theorem \ref{main} itself is the core of Section \ref{FinalProof}, where it is split into two parts. First, Proposition \ref{MinhaRepresentacao} writes $\Omega_E$ in terms of ray spaces of suitable order trees, reading thus as an edge-related counterpart to Theorem 1 in \cite{representacao}. This intermediate step relies on a recent representation result for edge-end spaces due to Pitz in \cite{TreeCutPartition}, whose combinatorial requirements are reviewed throughout Section \ref{sec:GraphTrees}. On the other hand, the appropriate spaces claimed by Proposition \ref{MinhaRepresentacao} are constructed with tools from Section \ref{sec:order}, most of which also follows the work of Pitz in \cite{caracterizacao}. Finally, Proposition \ref{CaracterizacaoDeFato} concludes the proof of our main result after combining Proposition \ref{MinhaRepresentacao} with a central characterization theorem in \cite{caracterizacao}, from where the above subbase axioms other than the singleton intersection property were inherited.

\section{Background from the previous literature}\label{sec:background}
\paragraph{}

As suggested in the introduction, Theorem \ref{main} strongly relies on the recent progresses within the literature regarding end and edge-end spaces. Aiming to compile most of such background, this section summarizes key definitions and discussions from the papers \cite{representacao,caracterizacao,TreeCutPartition}. In particular, we start by revisiting the work of Kurkofka and Pitz in \cite{representacao} concerning a representation result for the class of end spaces. There, they concluded that the family $\Omega = \{\Omega(G): G\text{ graph}\}$ may be obtained also from an order-theoretical perspective. In order to precise this remark, we first present the following set of terminologies regarding a partially ordered set $T=(T,\leq)$:

\begin{itemize}
	\item  We call $A\subseteq T$ an \textbf{antichain} of $T$ if it comprises pairwise incomparable elements. This is a counterpart to \textbf{totally ordered} subsets of $T$, whose elements are pairwise comparable regarding $\leq$. In its turn, we say that $C\subseteq T$ is \textbf{cofinal} in some $C'\subseteq T$ if for each $t'\in C'$ there is $t\in C$ such that $t'\leq t$;
	\item For each $t\in T$ we define the sets $\lceil t \rceil_T :=\{s \in T : s \leq t\}$, $\mathring{\lceil t\rceil}_T :=\lceil t\rceil \setminus \{t\} $ and $\lfloor t\rfloor_T :=\{s\in T : s \geq t\}$. When no ambiguity arises and the partial order is clear from the context, we may omit the subscript ``$T$'' in this previous notation. Then, $D\subseteq T$ is said to be \textbf{down-closed} in $T$ if $\lceil t\rceil \subseteq D$ for every $t\in D$;
	\item $T$ is called an (order) \textbf{tree} if it contains a $\leq-$minimum element $t_0\in T$ and $\lceil t\rceil$ is well-ordered by $\leq$ for each $t\in T$. The node $t_0$ is defined as the \textbf{root} of $T$ in this case, while the order-type $\mathrm{ht}(T)$ of $\mathring{\lceil t\rceil}$ for some $t\in T$ is said to be its \textbf{height}. If $\alpha$ is a given ordinal, we then set the $\alpha-$\textbf{level} of $T$ as $\mathcal{L}_{\alpha}(T):=\{t\in T : \mathrm{ht}(t) = \alpha\}$, so that $\min \{\alpha : \mathcal{L}_{\alpha}(T)=\emptyset\}$ is now called the \textbf{height} of the own tree;
	\item For each node $t$ of a tree $T$, the $\leq-$minimum elements from $\lfloor t\rfloor \setminus \{t\}$ are called its \textbf{successors}, which particularly define an antichain in $T$. When there are at least two distinct of these successors, we even say that $t$ is a \textbf{branching node} of $T$;
	\item A \textbf{path} in $T$ corresponds to any down-closed and totally ordered subset $P\subseteq T$, then also being well-ordered by $\leq$. If $P$ has limit order-type (or, equivalently, contains no $\leq-$maximum element), it is also called a \textbf{ray} of $T$.  The families of paths and rays in $T$ shall be denoted by $\mathcal{P}(T)$ and $\mathcal{R}(T)$ respectively;
	\item Finally, we say that a ray $R\in\mathcal{R}(T)$ in a tree $T$ is \textbf{branching} if there is a node $t\in T$ such that $R = \mathring{\lceil t\rceil}$. In this case, $t$ is called a \textbf{top} of $R$ and, by definition, it must have limit height in $T$.    
\end{itemize}

When identifying subsets of a given order tree $T$ with their identity functions, we may see $\mathcal{P}(T)$ and $\mathcal{R}(T)$ as subspaces of the product topology $2^T$. Considering this structure, $\mathcal{P}(T)$ and $\mathcal{R}(T)$ are referred in \cite{caracterizacao} as the \textbf{path} and \textbf{ray spaces} of $T$ respectively. In particular, a subbase for $\mathcal{P}(T)$ may be given by $\{[t]_T: t\in T\}\cup \{\mathcal{P}(T)\setminus [t]_T: t\in T\}$, where we set $[t]_T:=\{P\in \mathcal{P}(T): t\in P\}$ for every $t\in T$. In its turn, according to Lemma 2.1 in \cite{caracterizacao}, a basic open set in $\mathcal{R}(T)$ around a ray $R$ may be written as $[t,F]_T:=\{P\in \mathcal{R}(T): t\in P\text{ but }s\notin P\text{ for every }s\in F\}$, where $t$ is a fixed node of $R$ and $F$ comprises finitely many tops of this ray. As before, the subscript ``$T$'' shall be omitted from these notations when the corresponding order tree is uniquely determined by the context. Within this setting, one of the main results due to Kurkofka and Pitz in \cite{representacao} can be stated as follows:

\begin{thm}[\cite{representacao}, Theorem 1]\label{RepresentacaoPitz}
	The topological spaces from $\Omega:=\{\Omega(G) : G\text{ graph}\}$ are homeomorphic to precisely those that arise as ray spaces of special order trees, namely, trees that can be written as a union of countably many of its antichains. 
\end{thm}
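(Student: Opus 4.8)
The plan is to prove the two set-theoretic inclusions behind the claimed equality separately, using in both directions a \emph{nested system of finite separators} as the bridge between the combinatorial end structure of a graph and the order structure of a tree. Throughout I would keep in mind the explicit description of the basic open sets of $\mathcal{R}(T)$ recalled above, namely the sets $[t,F]$, since what a homeomorphism must ultimately respect is the comparison of these with the basic neighbourhoods $\Omega(S,[r])$ of an end space.

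For the inclusion $\{\text{ray spaces of special trees}\}\subseteq\Omega$ I would argue constructively: given a special tree $T=\bigcup_{n\in\mathbb{N}}A_n$, I build a graph $G$ whose ends are exactly the rays of $T$. The skeleton of $G$ follows the order of $T$, each node $t$ carrying a private one-way path and each successor being joined to its predecessor, so that a ray $R\in\mathcal{R}(T)$ is traced by a genuine ray of $G$. The delicate feature is a branching ray $R$ carrying many tops: these tops form an antichain, possibly uncountable, and must become distinct ends all accumulating to the end named by $R$, yet remain pairwise separable by a \emph{finite} vertex set. I would realise this with a broom-type gadget attached along $R$, so that deleting a single attachment vertex isolates the corresponding top; excluding finitely many tops then corresponds exactly to a neighbourhood $[t,F]$, and the (possibly) uncountable family of tops forces the (possibly) uncountable character one expects. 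The countable antichain decomposition $A_0,A_1,\dots$ is used here to schedule the attachment of these gadgets in $\omega$ rounds while keeping every separator finite; checking that the resulting end space is homeomorphic to $\mathcal{R}(T)$ is then a matching of subbases, which I expect to be routine given Lemma 2.1 of \cite{caracterizacao}.

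For the reverse inclusion $\Omega\subseteq\{\text{ray spaces of special trees}\}$ I would start from a nested separation system of \emph{finite adhesion that distinguishes all ends} of $G$ and organise its regions into an order tree $T$, ordering by reverse inclusion of regions and inserting, at each limit stage, the region obtained as the intersection of a descending chain. Ends of $G$ of countable character will then appear as ordinary rays of $T$, whereas an end of uncountable character should appear as a branching ray whose uncountably many tops encode the uncountably many incomparable separators converging to it; this is precisely why order trees with limit levels, rather than decomposition trees of height $\omega$, are unavoidable for uncountable $G$. Matching $\Omega(S,[r])$ with $[t,F]$ is again largely bookkeeping, so the genuine content of this direction lies elsewhere, as I explain next.

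The step I expect to be the main obstacle is establishing that the tree $T$ built in the reverse direction is \emph{special}, i.e.\ a union of countably many antichains. Since a chain meets each antichain at most once, specialness forces every ray of $T$ to be countable, so I must first arrange that every end of $G$, even one of uncountable character, is pinned down by a \emph{countable} ascending chain of finite separators, with all of its uncountably many sibling directions pushed into a single uncountable antichain of tops rather than stretched along an uncountable chain. Granting this, the remaining task is to exhibit a colouring of the nodes of $T$ by countably many colours under which any two comparable nodes receive different colours, each colour class then being an antichain; the natural candidate colours each region by a finite invariant read off from its finite boundary separator, and the crux, on which this whole direction rests, is to verify that a \emph{strict} refinement of regions always changes this invariant, so that the palette stays countable precisely because every separator is finite. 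I would isolate this as the principal lemma and attack it directly from the finiteness of adhesion.
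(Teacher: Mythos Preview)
The paper does not prove this theorem: it is quoted verbatim as background (Theorem~1 of \cite{representacao}, due to Kurkofka and Pitz) and is used only as an input to the paper's own results, so there is no proof in the present paper against which your proposal can be compared.

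That said, a brief comment on your sketch may still be useful. Your two-direction plan is broadly the right shape, and the intuition that ends of uncountable character must be encoded as branching rays with uncountable antichains of tops (rather than as uncountable chains) is exactly the point. However, the argument you describe for specialness is too optimistic as stated. Colouring each node by ``a finite invariant read off from its finite boundary separator'' does not obviously yield countably many colours: finite subsets of an uncountable vertex set are uncountably many, and a strict refinement of regions need not change any bounded numerical invariant such as the size of the separator. In the actual proof one does not colour an arbitrary tree-of-regions; rather, one first arranges the underlying tree-decomposition so that it has a very specific form (coming from a normal tree in an auxiliary graph, or equivalently from the partition-tree machinery of \cite{representacao}), and specialness is then inherited from that normal/partition structure rather than deduced a posteriori from finiteness of adhesion alone. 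If you want to turn your sketch into a proof, this is the lemma you need to replace: produce the tree $T$ already equipped with a canonical countable antichain decomposition, instead of hoping to colour a generic nested system after the fact.
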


Later in Section \ref{FinalProof}, Proposition \ref{MinhaRepresentacao} may be read as a restriction of the above result to the class of edge-end spaces, providing there a correspondence with a particular family of ray spaces of (special) order trees. Before that, however, we shall first revisit the work of Pitz in \cite{caracterizacao} towards a purely topological description for the family of spaces characterized by Theorem \ref{RepresentacaoPitz}. In particular, once fixed a family $\mathcal{C}$ comprising non-empty subsets of a topological space $X$, we recall the following definitions from subsections 2.5 and 3.1 in \cite{caracterizacao}:  

\begin{itemize}
	\item $\mathcal{C}$ is said to be a \textbf{clopen subbase} if $\mathcal{C}\cup \{X\setminus C : C \in \mathcal{C}(X)\}$ is actually a subbase of $X$;
	\item We call $\mathcal{C}$ \textbf{nested} if $C\subseteq D$ or $D\subseteq C$ whenever $C,D \in \mathcal{C}$ intersect. If the elements of $\mathcal{C}$ are pairwise disjoint, we simply call it a \textbf{disjoint} family.;
	\item We say that $\mathcal{C}$ is $\sigma-$\textbf{disjoint} if we can write $\mathcal{C}= \bigcup_{n\in\mathbb{N}}\mathcal{A}_n$ for some countable collection $\{\mathcal{A}_n\}_{n\in\mathbb{N}}$ of disjoint subfamilies of $\mathcal{C}$;
	\item $\mathcal{C}$ is called \textbf{noetherian} if there is no $\subseteq-$strictly ascending chain in $\mathcal{C}$. In other words, for each sequence in $\mathcal{C}$ of the form $C_0\subseteq C_1\subseteq C_2 \subseteq \dots$ there is some $n\in\mathbb{N}$ such that $C_n=C_m$ for every $m\geq n$;
	\item We say that $\mathcal{C}$ is \textbf{centered} if $\bigcap \mathcal{F}\neq \emptyset$ whenever $\mathcal{F}\subseteq \mathcal{C}$ is a non-empty and finite subset. If any centered subfamily of $\mathcal{C}$ has itself non-empty intersection, then $\mathcal{C}$ is further said to be \textbf{complete};
	\item We say that $\mathcal{C}$ is \textbf{hereditarily complete} if $\{C\cap Y : C \in \mathcal{C}\}$ is complete in $Y$ for each closed subspace $Y\subseteq X$;
	\item An order-theoretical tree $T = (T,\leq)$ is said to be a $\mathcal{C}-$\textbf{tree} if there is an associated function $f: T\to \mathcal{C}$ such that, given two nodes $s,t\in T$, we have $f(t)\subseteq f(s)$ if $s\leq t$ and $f(t)\cap f(s) = \emptyset$ if $t$ and $s$ are incomparable regarding $\leq$.
\end{itemize}

In particular, the subbase properties mentioned by Theorem \ref{main} are now completely set throughout the above items. In fact, most of its hypothesis are inherited from the characterization results of Pitz in \cite{caracterizacao}, whose central application reads as follows: a topological space $X$ is homeomorphic to the end space of some graph if, and only if, it admits a clopen subbase that is nested, noetherian, hereditarily complete and $\sigma-$disjoint. Supported by Theorem \ref{RepresentacaoPitz}, a detailed version of this correspondence could be recalled from Theorem 3.6 in \cite{TreeCutPartition} as stated below:

\begin{thm}[\cite{caracterizacao}, Theorem 3.6]\label{caracterizacaoPitz}
	A topological space $X$ is homeomorphic to the ray space of some order tree $T$ if, and only if, it admits a clopen subbase $\mathcal{C}$ which is noetherian, nested and hereditarily complete. In this case, $T$ can be chosen to be a $\mathcal{C}-$tree such that, denoting by $f: T\to \mathcal{C}$ the associated map, an homeomorphism $e: X\to \mathcal{R}(T)$ may be defined as $$x\mapsto e(x):=\{t\in T: x\in f(t)\}$$ for every $x\in X$. Moreover, $T$ is special if, and only if, $\mathcal{C}$ is $\sigma-$disjoint.  
\end{thm}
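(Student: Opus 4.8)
The plan is to prove both implications of Theorem~\ref{caracterizacaoPitz} around the explicit map $e(x)=\{t\in T: x\in f(t)\}$, treating the dictionary between clopen subbases and $\mathcal{C}$-trees as the backbone and isolating (hereditary) completeness as the one analytic ingredient. For \emph{sufficiency}, starting from a nested, noetherian, hereditarily complete clopen subbase $\mathcal{C}$, I would first organize $\mathcal{C}$ into an order tree by reverse inclusion. Nestedness forces the set $\{D\in\mathcal{C}: D\supseteq C\}$ of ``ancestors'' of each $C$ to be a $\subseteq$-chain, while the noetherian hypothesis, forbidding infinite $\subseteq$-ascending chains, makes every such chain well-ordered; adjoining $X$ as a common root then yields a $\mathcal{C}$-tree in the sense of the last bullet of Section~\ref{sec:background}, with $f$ essentially the inclusion. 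The subtlety is that a point lying in a $\subseteq$-minimal member of $\mathcal{C}$ would make the chain $\{t: x\in f(t)\}$ terminate, so $e(x)$ would be a path with a maximum rather than a ray. I would resolve this by \emph{inflating} the tree: wherever a branch would otherwise stop, insert an $\omega$-indexed chain of new nodes all carrying the same $f$-value, so that each $e(x)$ acquires limit order type. This preserves the $\mathcal{C}$-tree axioms (equal $f$-values satisfy both required inclusions, and comparable nodes create no disjointness clash) and guarantees $e(x)\in\mathcal{R}(T)$, with the inflation arranged so that the rays of $T$ are exactly the branches realized by points of $X$.

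With $T$ and $f$ in hand I would verify that $e$ is a homeomorphism onto $\mathcal{R}(T)$. Down-closure and total orderedness of $e(x)$ are immediate from the $\mathcal{C}$-tree conditions and nestedness, while injectivity follows because a clopen subbase separates points. Continuity is formal: the preimage of the subbasic set $[t]_T$ is exactly $f(t)\in\mathcal{C}$ and the preimage of its complement is $X\setminus f(t)$, so $e$ pulls the subbase from Lemma~2.1 in \cite{caracterizacao} back into the subbase of $X$. The two substantial points are \emph{surjectivity} and \emph{continuity of $e^{-1}$}. For surjectivity, given a ray $R\in\mathcal{R}(T)$ the family $\{f(t): t\in R\}$ is a $\subseteq$-decreasing, hence centered, subfamily of $\mathcal{C}$, so \emph{completeness} delivers a point $x\in\bigcap_{t\in R}f(t)$; combined with the fact that the inflation produces no spurious limit-rays, this forces $e(x)=R$, so that completeness and injectivity together make $e$ a bijection. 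For the inverse I would show that $e$ is a closed map, i.e.\ that the image of a closed $A\subseteq X$ contains each of its limit rays, and this is precisely where \emph{hereditary} completeness enters, applied to the closed subspace $A$ to produce, for every candidate limiting ray, an actual point of $A$ mapping onto it.

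For \emph{necessity}, assuming $X=\mathcal{R}(T)$, I would take the candidate subbase $\mathcal{C}=\{[t]_T: t\in T,\ [t]_T\neq\emptyset\}$ with $[t]_T=\{R\in\mathcal{R}(T): t\in R\}$. Each $[t]_T$ is clopen, being cut out by a single coordinate of $2^T$, and by Lemma~2.1 in \cite{caracterizacao} these sets together with their complements generate the topology, so $\mathcal{C}$ is a clopen subbase. Nestedness is clear, since $[s]_T$ and $[t]_T$ can meet only if $s,t$ lie on a common ray and are therefore comparable, with comparability giving inclusion; the noetherian property reduces to the well-ordering of $\mathring{\lceil t\rceil}$, as a strictly ascending chain of $[t]_T$'s corresponds to a strictly descending chain of nodes. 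The decisive condition is \emph{hereditary completeness}: fixing a closed $Y\subseteq\mathcal{R}(T)$ and a centered subfamily indexed by $S\subseteq T$, centeredness makes $S$ a chain, and if $S$ has a maximal node the intersection is a single nonempty member of the family. Otherwise $P:=\bigcup_{t\in S}\lceil t\rceil$ is itself a ray extending every $t\in S$, and the delicate part is to produce a witnessing ray \emph{inside} $Y$: I would do this by a limit argument, using the rays through the finite subsets of $S$ supplied by centeredness together with the closedness of $Y$ to capture a limiting ray of $Y$ containing all of $S$.

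I expect the main obstacle to be exactly this completeness/topology interface. In the sufficiency direction it is the proof that $e^{-1}$ is continuous, equivalently that $e$ is closed, which must exploit hereditary completeness over \emph{every} closed subspace rather than completeness of $\mathcal{C}$ alone; in the necessity direction it is verifying hereditary completeness of $\{[t]_T\}$, where the naive limit of rays taken in $2^T$ may acquire a maximum and drop out of $\mathcal{R}(T)$, so the argument must keep the limiting object a genuine ray lying in $Y$. The final addendum, ``$T$ special $\iff$ $\mathcal{C}$ is $\sigma$-disjoint'', I would read off directly from the construction: incomparable nodes of a $\mathcal{C}$-tree have disjoint $f$-values, so antichains of $T$ correspond through $f$ to $\subseteq$-pairwise-disjoint subfamilies of $\mathcal{C}$, whence a decomposition of $T$ into countably many antichains is the same datum as a $\sigma$-disjoint decomposition of $\mathcal{C}$.
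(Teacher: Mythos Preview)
This theorem is not proved in the present paper: it is stated in Section~\ref{sec:background} as a quoted result from Pitz's paper \cite{caracterizacao} (Theorem~3.6 there) and is used as a black box in the proof of Proposition~\ref{CaracterizacaoDeFato}. Consequently there is no ``paper's own proof'' to compare your proposal against. The only fragments the paper touches are in the necessity direction, where it records (inside the proof of Proposition~\ref{CaracterizacaoDeFato}) that $\mathcal{C}=\{[t]:t\in T\}$ is noetherian and hereditarily complete by Propositions~2.10 and~2.13 of \cite{caracterizacao}, and that nestedness is immediate from comparability of nodes; but it does not reproduce the construction of the $\mathcal{C}$-tree or the homeomorphism $e$.

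That said, your outline is broadly in the spirit of \cite{caracterizacao}. Two points would need tightening if you were to carry it out. First, in the sufficiency direction, your ``inflation by an $\omega$-chain with the same $f$-value'' step is delicate: if you simply append an $\omega$-tail at every $\subseteq$-minimal $C\in\mathcal{C}$, you must also ensure that \emph{no} new rays are created beyond those of the form $e(x)$, otherwise surjectivity fails; and you must check that distinct points $x\neq y$ lying in the same minimal $C$ still receive distinct rays, which your description does not yet guarantee. Pitz's actual construction in \cite{caracterizacao} handles this more carefully than a naive inflation. Second, in the necessity direction, your limit argument for hereditary completeness is the right intuition, but as you note the limit in $2^T$ may acquire a maximum; the argument in \cite{caracterizacao} (Proposition~2.13) addresses exactly this, and you would need to reproduce that care rather than leave it as a heuristic.
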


Again, we shall later explain on Section \ref{FinalProof} how Theorems \ref{RepresentacaoPitz} and \ref{caracterizacaoPitz} can be restricted to the theory of edge-end spaces. However, this still requires a final recent result from the previous literature on the subject. In order to introduced it, call $C$ a \textbf{region} of a fixed graph $G$ if $C$ is the connected component of $G\setminus F$ for some finite set of edges $F\subseteq E(G)$. It is clear that $V(C)\subseteq V(G)$ in this case, but is also not difficult to regard $\Omega_E(C)$ as a subset of $\Omega_E(G)$: in fact, since $F$ is finite, two rays in $C$ are edge-equivalent in $C$ if, and only if, so they are in $G$. In other words, each equivalence class $[r]_E\in \Omega_E(C)$ gives rise to the unique edge-end of $r$ in $G$, allowing us to write $\Omega_E(C)\subseteq \Omega_E(G)$. When denoting $\|G\|:=V(G)\cup \Omega_E(G)$ and $\|C\|:=V(C)\cup \Omega_E(G)$, this just formalizes the inclusion $\|C\|\subseteq \|G\|$. Hence, following the definition of Pitz in \cite{TreeCutPartition}, we may set $\{\|C\|: C\text{ is  a region of }G\}$ as a basis for a topology over $\|G\|$. In particular, the edge-end space $\Omega_E(G)$ is realized as the corresponding subspace of $\|G\|$. Within those terms, the characterization below due to Pitz in \cite{TreeCutPartition} finishes the required theoretical background for approaching our Theorem \ref{main}:

\begin{thm}[\cite{TreeCutPartition}, Theorem 2]\label{RepresentacaoEdgeEndsPitz}
	A topological space $X$ is homeomorphic to the edge-end space of some graph if, and only if, there is a graph-theoretical tree $T$ such that $X$ is homeomorphic to some subspace $K\subseteq \|T\|$ containing $\Omega_E(T)$.
\end{thm}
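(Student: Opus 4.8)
The plan is to prove the two implications separately, handling the reverse (sufficiency) direction by an explicit graph construction and the forward (necessity) direction by a tree-of-cuts decomposition of $G$.

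For the reverse direction, suppose we are given a graph-theoretical tree $T$ together with a subspace $K$ satisfying $\Omega_E(T)\subseteq K\subseteq\|T\|$, and write $U:=K\cap V(T)$ for the vertices of $T$ lying in $K$. The idea is to realize $K$ as an edge-end space by turning each $v\in U$ into a new edge-end located exactly where $v$ sat. Concretely, I would form $G$ from $T$ by attaching at every $v\in U$ a disjoint \emph{broom} $H_v$: a connected graph meeting the rest of $G$ only at $v$, with exactly one edge-end $\omega_v$, and arranged (for instance as a sequence of finite levels whose edge-connectivity to $v$ increases without bound) so that no finite edge set separates $\omega_v$ from $v$. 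Since each $H_v$ meets $G$ only in $v$, a finite edge cut confines $\omega_v$ to the component of $v$, while the single-edge-end property guarantees that $H_v$ contributes precisely one new end. One then checks that $\Omega_E(G)=\Omega_E(T)\cup\{\omega_v:v\in U\}$, that the ends inherited from $T$ stay pairwise distinct and distinct from the $\omega_v$ (in a tree every ray meets each vertex at most once, so a finite cut of $T$ separates any genuine end from a fixed $v$), and that the map fixing $\Omega_E(T)$ and sending $v\mapsto\omega_v$ is a homeomorphism onto $K$, because the infinite edge-connectivity of the brooms forces the regions of $G$ to restrict to the regions of $T$.

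For the forward direction, suppose $X\cong\Omega_E(G)$; working componentwise (and afterwards joining the resulting trees at a common root by single edges), I may assume $G$ connected. The core step is to organize the finite edge cuts of $G$ into a single graph-theoretical tree $T$ — a \emph{tree-cut decomposition} — together with a partition $\{B_t:t\in V(T)\}$ of $V(G)$ such that each edge of $T$ induces a finite cut of $G$, any two distinct edge-ends of $G$ are eventually separated by such a cut, and the decomposition is \emph{tight}, in the sense that every edge of $T$ is essential and every ray of $T$ traces a strictly $\subseteq$-decreasing chain of nonempty regions of $G$. Given such a $T$, I would define $\phi\colon\|G\|\to\|T\|$ by sending a vertex in $B_t$ to $t$ and an edge-end $\omega$ to the node or end of $T$ that its defining nested regions follow, and put $K:=\phi(\Omega_E(G))$. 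Finite adhesion makes $\phi$ continuous and matches the regions of $G$ with the basic sets $\|C\|$ of $T$, while the separation property makes $\phi$ injective on $\Omega_E(G)$; hence $\phi$ restricts to a homeomorphism $\Omega_E(G)\to K\subseteq\|T\|$. Finally, tightness yields $\Omega_E(T)\subseteq K$: each end of $T$ is the limit of a nested sequence of nonempty regions of $G$, and a K\"onig-type compactness argument produces an edge-end of $G$ in that intersection, which $\phi$ sends to the given end.

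The main obstacle is the construction of the tree-cut decomposition in the forward direction: for an arbitrary (possibly uncountable) graph one must exhibit a single graph-theoretical tree whose edges are finite cuts, whose ends capture every edge-end of $G$ injectively, and which is tight enough to force $\Omega_E(T)\subseteq K$. This is exactly the kind of canonical decomposition supplied by the tree-cut partition machinery, and the delicate heart of the argument is verifying that it simultaneously preserves the region topology and realizes \emph{all} ends of $T$ — rather than producing spurious ends with no underlying edge-end of $G$. By contrast, the broom gadget of the reverse direction is elementary once the single-edge-end, finitely-inseparable property has been arranged.
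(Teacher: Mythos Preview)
This statement is not proved in the present paper at all: it appears in Section~\ref{sec:background} as a quoted result (Theorem~2 of \cite{TreeCutPartition}), and the paper uses it as a black box in the proof of Proposition~\ref{MinhaRepresentacao}. There is therefore no proof here to compare your proposal against.

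As for the proposal itself, it is a reasonable outline but not a proof. The reverse direction via brooms is the natural gadget, and your description is essentially correct, though the claim that ``the infinite edge-connectivity of the brooms forces the regions of $G$ to restrict to the regions of $T$'' needs a precise argument: a region of $G$ can certainly cut through a broom, and what you really need is that such cuts are topologically invisible on $\Omega_E(G)$, i.e.\ that every basic open set of $\Omega_E(G)$ already arises from a cut of $T$ alone. For the forward direction you openly defer the existence of a tight, end-distinguishing tree-cut decomposition of finite adhesion to ``the tree-cut partition machinery'' --- but that machinery \emph{is} the content of \cite{TreeCutPartition}, so at that point you are invoking the theorem you set out to prove. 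In short, your sketch correctly identifies the architecture of the argument and where the difficulty lies, but the substantive step (building $T$ for an arbitrary graph so that every end of $T$ is hit and no spurious ends appear) is left as a citation to the very source of the statement.
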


\section{Path spaces of graph-theoretical trees} \label{sec:GraphTrees}
\paragraph{} 
In this section, we aim to bring an order-theoretical interpretation to the topology of $\|T\|$ when $T$ is a given graph-theoretical tree, since this is a first step in order to connect Theorem \ref{RepresentacaoEdgeEndsPitz} with Theorems \ref{RepresentacaoPitz} and \ref{caracterizacaoPitz}. In fact, as an acyclic connected graph, recall that $T$ admits a natural tree-order $\leq$ after a root $v_0\in V(T)$ is fixed. More precisely, for vertices $u,v \in V(T)$ we set $u \leq v$ if, and only if, $u$ belongs to the unique path $P_v$ in $T$ whose endpoints are $v$ and $v_0$. In this case, $P_u\subseteq P_v$ and $P_v\setminus P_u$ is still a path in $T$. As also highlighted in \cite{order}, $\leq$ indeed gives to $(V(T),\leq)$ a structure of an order tree, in which $\lceil v\rceil = P_v$ for every $v\in V(T)$. Hence, the finiteness of each $P_v$ particularly implies that the height of $T$ is $\omega$. Summarizing other connectivity properties of $T$ in terms of its ordering $\leq$, the result below compiles some routine exercises within graph theory, but which we detail here for convenience of the reader:

\begin{lemma}\label{BasicPropertiesTree}
	Let $T$ be a graph-theoretical tree rooted at some $v_0\in V(T)$. If $\leq$ denotes the corresponding tree order, then:
	\begin{itemize}
		\item[$a)$] $T\setminus e$ has two connected components for each $e\in E(T)$. If we write $e = uv$ for vertices $u,v\in V(T)$ such that $u<v$, then one of these components is induced by $\lfloor v\rfloor$;
		\item[$b)$] Each edge-end of $T$ contains an unique representative starting at $v_0$.
		\item[$c)$] The family $\mathcal{C}(T):=\{\|C\|: C \text{ is a connected component of }T\setminus e\text{ for some }e\in E(T)\}$ is a subbase for the topology of $\|T\|$.
	\end{itemize}
	Moreover, an order tree $T:=(T,\leq)$ is order-isomorphic to a rooted graph-theoretical tree as above if, and only if, $T$ has height bounded by $\omega$.
\end{lemma}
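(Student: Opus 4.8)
The plan is to treat the two directions of this equivalence separately, with essentially all of the work concentrated in the \emph{if} part.

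For the \emph{only if} direction, suppose $(T,\leq)$ is order-isomorphic to a rooted graph-theoretical tree. As already remarked just before the lemma, in such a tree one has $\lceil v\rceil = P_v$ for every vertex $v$, and since each path $P_v$ is finite the initial segment $\mathring{\lceil v\rceil}$ has finite order type; hence $\mathrm{ht}(v)<\omega$ for all $v$, so that $\mathcal{L}_\alpha(T)=\emptyset$ whenever $\alpha\geq\omega$ and the height of the tree is at most $\omega$. Because heights are preserved by order-isomorphisms, the same bound transfers to $(T,\leq)$ itself.

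For the \emph{if} direction, assume every level $\mathcal{L}_\alpha(T)$ with $\alpha\geq\omega$ is empty, so that $\mathrm{ht}(t)<\omega$, and thus $\lceil t\rceil$ is finite, for every $t\in T$. The key consequence is the existence of an immediate predecessor: for each non-root node $t$ the set $\mathring{\lceil t\rceil}$ is nonempty, well-ordered and finite, hence admits a maximum $p(t)$. I would then build a graph $G$ on the vertex set $V(G):=T$ whose edges are exactly the pairs $\{t,p(t)\}$ with $t\neq t_0$, and claim that $G$, rooted at $t_0$, is a graph-theoretical tree whose associated tree order is precisely $\leq$. To verify this, first observe that for any $t$ the finite chain $\lceil t\rceil=\{t_0=s_0<s_1<\dots<s_n=t\}$ satisfies $p(s_{i+1})=s_i$, so the $s_i$ form a path from $t_0$ to $t$ in $G$; this already gives connectedness, and once $G$ is known to be acyclic it shows that the unique $t_0$--$t$ path in $G$ is $\lceil t\rceil$, whence the tree order induced by $G$ coincides with $\leq$ and the identity map furnishes the desired order-isomorphism. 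Acyclicity I would extract from the fact that $\mathrm{ht}(p(t))=\mathrm{ht}(t)-1$, so heights change by exactly one along every edge: in a hypothetical cycle, a vertex of locally maximal height would be forced to have two distinct parents, contradicting the uniqueness of $p$.

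The main obstacle --- and the exact place where the hypothesis is invoked --- is the well-definedness of the immediate predecessor $p(t)$. A node of limit height would leave $\mathring{\lceil t\rceil}$ without a maximum, and such a node could not be attached to the remainder of the tree by a single edge; the bound on the height is precisely what forbids this and renders the parent function, and hence the whole construction, well defined. Everything else reduces to the routine connectedness and acyclicity checks sketched above.
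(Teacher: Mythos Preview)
Your proposal only addresses the ``Moreover'' equivalence and leaves items $a)$, $b)$, $c)$ untouched. The paper proves all four. While $a)$ and $b)$ are quick, item $c)$ needs an actual argument: one has to show that every region of $T$ (a component of $T\setminus F$ for finite $F$) is the intersection $\bigcap_{e\in F}C_e$ of the single-edge components containing it, and that this identity lifts to $\|C\|=\bigcap_{e\in F}\|C_e\|$ on the level of edge-ends as well; only then does $\mathcal{C}(T)$ generate the defining basis $\{\|C\|:C\text{ a region}\}$ of $\|T\|$.

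For the ``Moreover'' part itself your argument is correct and essentially the paper's. Both define the immediate predecessor of a non-root node (your $p(t)$, the paper's $t'$), join each node to its predecessor by an edge, exhibit $\lceil t\rceil$ as a $t_0$--$t$ path to obtain connectedness, and deduce that the induced tree order agrees with $\leq$. The only variation is in certifying that the resulting graph is a tree: the paper checks minimal connectedness (deleting $e=tt'$ with $t<t'$ separates $\lfloor t'\rfloor$ from its complement), whereas you argue acyclicity directly via the height function. Both are standard and equally short; the paper's route has the minor side-benefit of simultaneously recovering item~$a)$.
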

\begin{proof}
	
	Any graph-theoretical tree $T$ is minimally connected, in the sense that $T$ is connected but $T\setminus e$ is disconnected for every edge $e\in E(T)$. In this case, each connected component of $T\setminus e$ contains one of the endpoints of $e$. When writing $e=uv$ for vertices $u,v\in V(T)$ such that $u<v$, the connectedness of $\lfloor v \rfloor$ as in $a)$ follows from Lemma 1.5.5 in \cite{diestellivro} for instance.

	Aiming to conclude $b)$, note first that each edge-end of $T$ admits a representative starting at $v_0$ because $T$ is connected. However, if $r$ and $r'$ are two distinct rays starting at $v_0$, Lemma 1.5.5 in \cite{diestellivro} also ensures the existence of $\leq-$incomparable vertices $u,u'\in V(T)$ such that $\lfloor u\rfloor$ and $\lfloor u'\rfloor$ contain tails of $r$ and $r'$ respectively. In fact, $u$ and $u'$ can be chosen as successors of the $\leq-$maximum element from the intersection $r\cap r'$, which contains $v_0$ but is finite since $r,r'$ are distinct rays of an acyclic graph. Then, by the previous item, $r$ and $r'$ have their tails on different connected components of $T\setminus e$, where $e\in E(T)$ may be fixed as the unique edge in $T$ of the form $vu$ with $v<u$. Hence, $r$ and $r'$ do not belong to the same edge-end from $\Omega_E(T)$.

	In order to prove $c)$, first note that each $\|C\|\in \mathcal{C}(T)$ is clearly an open set in $\|T\|$ by definition of their topologies. Conversely, let $C$ be a region of $T$ and fix a corresponding finite set of edges $F\subseteq E(T)$ such that $C$ is a connected component of $T\setminus F$. We shall now verify the equality $\|C\| = \bigcup_{e\in F}\|C_e\|$, where $C_e$ denotes the connected component of $T\setminus e$ containing $C$ for each edge $e\in F$. In particular, any edge-end of $T$ with representatives in $C$ also has representatives in $C_e$, because $V(C)\subseteq V(C_e)$ precisely by the choice of $C_e$. Hence, the inclusion $\|C\|\subseteq \bigcup_{e\in F}\|C_e\|$ is immediate. On the other hand, fix a pair of vertices $u,v\in \bigcap_{e\in F}V(C_e)$ and denote by $P$ the unique path in $T$ containing them as endpoints. This uniqueness implies that $P\subseteq C_e$ for each $e\in F$, once $C_e$ is connected and also contains both $u$ and $v$. Since $e\notin E(P)$ for every $e\in F$, we showed that $\bigcap_{e\in F}V(C_e)$ describes the connected component of $T\setminus F$ containing $C$ and, therefore, that $C = \bigcap_{e\in F}V(C_e)$. If $r$ is ray of $T$ which has a tail in $C_e$ for each $e\in F$, the finiteness of $F$ also ensures that we can fix such a tail $r'$ for which $E(r')\cap F=\emptyset $. Hence, $V(r')\subseteq \bigcap_{e\in F}V(C_e)\subseteq C$ because $r$ (and thus also $r'$) has infinite intersection with $C_e$ for every $e\in F$. In other words, concluding the inclusion $\bigcap_{e\in F}\Omega_E(C_e)\subseteq \Omega_E(C)$, we showed that $[r]_E\in \Omega_E(C)$. Hence, $\|C\| = \bigcap_{e\in F}\|C_e\|$.

	Finally, in order to prove the ``Moreover-''part of the statement, we already highlighted that $(T,\leq)$ has height $\omega$ if the ordering $\leq$ is obtained after fixing a root for a graph-theoretical tree $T$. Conversely, if $T:=(T,\leq)$ is an order tree rooted at some $t_0\in T$ and whose nodes have finite height, then each $t\in T\setminus \{t_0\}$ has a well defined \textit{predecessor} $t':=\max \mathring{\lceil t\rceil} $. Then, consider the graph on $T$ whose set of edges is given by $E(T):=\{tt': t\in T, t'\text{ is the predecessor of }t\}$. If $t_0\leq t_1 \leq t_2\leq \dots \leq t_n$ expresses the down-closure of some node $t=t_n\in T$, it clearly follows that $t_{i}$ is a predecessor of $t_{i+1}$ for each $0\leq i < n$, so that $\{t_i\}_{i=0}^n$ induces a graph-theoretical path in $T$ connecting $t$ to $t_0$. In other words, the edge set $E(T)$ turns $T$ into a connected graph. Moreover, for each $e=tt' \in E(T)$ such that $t< t'$, it also follows from the definition of $E(T)$ that $e$ is the unique edge which has one endpoint in $\lfloor t'\rfloor = \{s\in T : s \geq t'\} $ and the other in $T\setminus \lfloor t'\rfloor$. Hence, $T\setminus e$ gives rise to a disconnected graph for every $e\in E(T)$, proving that $T$ is minimally connected and, thus, a graph-theoretical tree. In this case, $\leq$ corresponds to the tree-order obtained after fixing $t_0$ as a root because, for each node $t\in T$, the unique graph-theoretical path in $T$ having $t$ and $t_0$ as endpoints was already verified to be induced by $\lceil t\rceil$.     
\end{proof}

In its turn, item $b)$ from the above result motivates an identification between edge-ends of a graph-theoretical tree $T$ and the rays in $\mathcal{R}(T)$. More precisely, again denoting by $\leq$ the tree-order of $T$ corresponding to a fixed root $v_0\in V(T)$, an order-theoretical path $P\in \mathcal{P}(T)$ can be completely described through one of the following items:

\begin{itemize}
	\item If $P$ is finite, then its $\leq-$maximum vertex $v$ verifies that $P = \lceil v \rceil$, since $P$ itself is down-closed regarding $\leq$;
	\item If $P$ is infinite (and thus an element of $\mathcal{R}(T)$), then $P$ has order type $\omega$ because this ordinal is an upper bound for the own height of $T$. When fixing an enumeration $P=\{v_n\}_{n\in\mathbb{N}}$ such that $v_n<v_{n+1}$ for every $n\in\mathbb{N}$, it clearly follows that $v_{n+1}$ is the unique element of $\lceil v_{n+1}\rceil \setminus \lceil v_n\rceil$. Once both sets $\lceil v_{n+1}\rceil$ and $\lceil v_n\rceil$ describe graph-theoretical paths in $T$ by definition of $\leq$, it turns out that $P= v_0v_1v_2\dots$ is a presentation of a ray in $T$ starting at the root. 
\end{itemize}

As suggested by the above items, hence, we finish this section with a remark that the spaces $\mathcal{P}(T)$ and $\|T\|$ are homeomorphic. In particular, this formalizes a topological correspondence between order-theoretical and graph-theoretical rays in $T$:

\begin{lemma}\label{PathTopologies}
	Fix a graph-theoretical tree $T$ rooted at some $v_0\in V(T)$ and denote by $\leq$ the corresponding tree-order. Let $\sigma : \mathcal{P}(T)\to \|T\|$ be the map that assigns to each order-theoretical finite path $P\in\mathcal{P}(T)\setminus \mathcal{R}(T)$ the element $\sigma(P):=\max P$ and to each order-theoretical ray $P\in \mathcal{R}(T)$ the edge-end $\sigma(P):=[P]_E$. Then, $\sigma$ is an homeomorphism verifying $\sigma(\mathcal{R}(T)) = \Omega_E(T)$. 
\end{lemma}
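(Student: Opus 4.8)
The plan is to show that $\sigma$ is a well-defined continuous bijection whose inverse is also continuous, working level by level through the definitions. First I would verify that $\sigma$ is a bijection. Surjectivity onto $\|T\| = V(T) \cup \Omega_E(T)$ is immediate: every vertex $v$ is hit by the finite path $\lceil v\rceil$, while by item $b)$ of Lemma \ref{BasicPropertiesTree} every edge-end $[r]_E \in \Omega_E(T)$ has a unique representative $r$ starting at $v_0$, and the discussion preceding this lemma identifies that representative with an order-theoretical ray $P \in \mathcal{R}(T)$, so $\sigma(P) = [r]_E$. For injectivity, two distinct finite paths have distinct maxima, two distinct vertices come from distinct down-closures, and two distinct rays $P \neq P'$ starting at $v_0$ determine edge-inequivalent rays precisely by the argument in item $b)$; the finite-versus-infinite cases cannot collide since $\sigma$ sends them into the disjoint sets $V(T)$ and $\Omega_E(T)$. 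This last observation simultaneously establishes the claimed equality $\sigma(\mathcal{R}(T)) = \Omega_E(T)$.

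Next I would match the topologies using the two subbases already in hand. On the domain side, $\mathcal{P}(T)$ carries the subbase $\{[t]_T : t\in T\} \cup \{\mathcal{P}(T)\setminus[t]_T : t \in T\}$ recalled in Section \ref{sec:background}; on the codomain side, item $c)$ of Lemma \ref{BasicPropertiesTree} gives the subbase $\mathcal{C}(T) = \{\|C\| : C \text{ a component of } T\setminus e,\ e\in E(T)\}$ for $\|T\|$. The strategy is to show that $\sigma$ carries one subbase onto the other. The key computation is the following: for an edge $e = uv$ with $u < v$, item $a)$ identifies the component of $T\setminus e$ on the ``$v$-side'' as the one induced by $\lfloor v\rfloor$, so a vertex $w$ lies in it exactly when $v \leq w$, i.e. when $v \in \lceil w\rceil$, and a ray lies in it exactly when it passes through $v$. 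Translating through $\sigma$, this says $\sigma^{-1}(\|C_v\|) = [v]_T$, where $C_v$ is the $v$-side component; dually, the $u$-side component pulls back to $\mathcal{P}(T)\setminus[v]_T$. Since every node of $T$ other than the root arises as such a $v$, and the root's down-closure is the whole space, this gives a bijection between the two subbases under $\sigma$, whence both $\sigma$ and $\sigma^{-1}$ are continuous and $\sigma$ is a homeomorphism.

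The main obstacle, and the step deserving the most care, is the correct handling of edge-ends versus vertices when checking that $\sigma^{-1}(\|C_v\|) = [v]_T$. The set $\|C_v\| = V(C_v) \cup \Omega_E(C_v)$ mixes vertices and edge-ends, and one must confirm on both pieces at once that membership in $\|C_v\|$ corresponds exactly to containing $v$ as an order-theoretical element. For the vertex part this is the clean equivalence $w \in \lfloor v\rfloor \iff v \in \lceil w\rceil \iff v \in \sigma^{-1}(w)$. For the edge-end part I would argue that a ray $P = v_0 v_1 v_2 \cdots$ starting at the root has a tail in $C_v$ if and only if $v \in P$: if $v = v_n$ for some $n$ then the tail $v_n v_{n+1}\cdots$ lies in $\lfloor v\rfloor$ and avoids $e$, so $[P]_E \in \Omega_E(C_v)$; conversely, if $v \notin P$ then $P$ stays in the $u$-side component $T\setminus\lfloor v\rfloor$ (since $P$ is down-closed and $v\notin P$ forces $v_k \not\geq v$ for all $k$), so its tail misses $C_v$. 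This is exactly where the order-theoretic description of rays from the paragraph before the lemma, together with item $a)$, does the real work; everything else is routine bookkeeping, and I would state it as such rather than belaboring it.
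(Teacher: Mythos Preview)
Your proposal is correct and follows essentially the same approach as the paper: establish bijectivity via item $b)$ of Lemma \ref{BasicPropertiesTree}, then match the subbase $\{[v]_T,\mathcal{P}(T)\setminus[v]_T:v\in T\}$ on $\mathcal{P}(T)$ with the subbase $\mathcal{C}(T)$ from item $c)$ by computing $\sigma([v]_T)=\|C_v\|$ where $C_v=\lfloor v\rfloor$ via item $a)$. Your explicit verification of the edge-end case (showing $v\in P$ iff $[P]_E\in\Omega_E(C_v)$ via down-closedness) spells out a step the paper leaves implicit, but the overall structure and the key ingredients are identical.
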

\begin{proof}
	Since $\sigma(\lceil v \rceil) = \max \lceil v \rceil = v$ for every vertex $v\in V(T)$, it is clear that $\sigma$ establishes a bijection between the finite order-theoretical paths from $\mathcal{P}(T)$ and the vertices of $T$. Similarly, the second bullet point of Lemma \ref{BasicPropertiesTree} claims that $\sigma(P) = [P]_E\neq [Q]_E=\sigma(Q)$ if $P,Q\in \mathcal{R}(T)$ are different order-theoretical rays, which then can be seen as distinct graph-theoretical rays starting at the root of $T$. Conversely, if $r= v_0v_1v_2\dots$ is a graph-theoretical ray in $T$ starting at $v_0$, then the unique path in $T$ connecting each $v_n$ to $v_0$ is the one already contained in $r$ and given by $P_n:=v_0v_1\dots v_n$. Observing that $\lceil v_n\rceil = P_{n}\subseteq P_{{n+1}}=\lceil v_{n+1}\rceil$ for each $n\in\mathbb{N}$ by definition of $\leq$, the equalities $r = \bigcup_{n\in\mathbb{N}}\lceil v_n\rceil = \{v_0<v_1<v_2<\dots\}$ now express $r$ as a totally ordered and down-closed subset of $T$, which clearly has $\omega$ as limit order type. In other words, $r\in \mathcal{R}(T)$ defines a ray of $T$ also in a order-theoretic context. Hence, $\sigma$ also restricts to a bijection between $\mathcal{R}(T)$ and $\Omega_E(T)$.

	It thus remains to show that $\sigma$ is a homeomorphism. To that aim, fix a vertex $v\in V(T)$ and recall the notation $[v]:=\{P\in \mathcal{P}(T): v\in P\}$. Since each element from $\mathcal{P}(T)$ is down-closed, note that a finite order-theoretical path from $\mathcal{P}(T)$ contains $v$ if, and only if, $\sigma(P) = \max P$ belongs to $\lfloor v\rfloor$. Similarly, an order-theoretical ray $P\in \mathcal{R}(T)$ contains $v$ if, and only if,  $P\setminus \lceil v\rceil \subseteq \lfloor v \rfloor $. Observing that $C_v:=\lfloor v\rfloor$ and $T\setminus C_v$ define two regions of $T$ by Lemma \ref{BasicPropertiesTree}, these equivalences formalize the equalities $\sigma([t])=\|C_v\|$ and $\sigma(\mathcal{P}(T)\setminus [t]) = \|T\setminus C_v\|$ for every $v\in V(T)$. Conversely, given an edge $e\in E(T)$ written as $e = uv$ for some pair $u,v\in V(T)$ such that $u<v$, the first bullet point from Lemma \ref{BasicPropertiesTree} again ensures that $C_v=\lfloor v\rfloor$ and $T\setminus C_v$ define the two connected components of $T\setminus e$. Hence, once $\sigma([v_0]) = \sigma(\mathcal{P}(T)) = \|T\|$, we just proved that $\sigma$ establishes a bijection between the subbase of $\mathcal{P}(T)$ given by $\{[v],\mathcal{P}(T)\setminus [v]: v\in V(T)\}$ and the subbase $\mathcal{C}(T)\cup\{\|T\|,\emptyset\}$ for $\|T\|$ as item $c)$ in Lemma \ref{BasicPropertiesTree} describes. Therefore, $\sigma$ is an homeomorphism. 
\end{proof}

\section{Suitable path and ray spaces}\label{sec:order}
\paragraph{}

This section revisits a key construction due to Pitz in \cite{caracterizacao} when proving his Theorem 3.9, which sets the class of all path spaces as the one that also arises from the compact ray spaces of order trees. Approaching this result is useful since Theorem \ref{RepresentacaoEdgeEndsPitz} is stated in terms of a path space topology (by Lemma \ref{PathTopologies}), while Theorems \ref{RepresentacaoPitz} and \ref{caracterizacaoPitz} rather concern characterizations of suitable ray spaces. In order to connect these structures, we first start with the standard remark below:

\begin{lemma}\label{InclusionwiseMinimalRay}
	Suppose that $D\subseteq T$ is a totally ordered subset of limit order type of an order tree $(T,\leq)$. Then, there is an unique $\subseteq-$minimal ray $R_D$ in $T$ containing $D$. Moreover, $D$ is cofinal in $R_D$. 
\end{lemma}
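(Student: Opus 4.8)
The plan is to take $R_D$ to be the down-closure of $D$, namely $R_D := \bigcup_{d\in D}\lceil d\rceil = \{s\in T : s\leq d\text{ for some }d\in D\}$, and to verify directly that this set is the $\subseteq$-minimum (hence the unique $\subseteq$-minimal) ray containing $D$. The inclusion $D\subseteq R_D$ is immediate since $d\in\lceil d\rceil$ for each $d\in D$, and down-closedness of $R_D$ follows at once from transitivity of $\leq$: if $s'\leq s\leq d$ with $d\in D$, then $s'\leq d$, so $s'\in R_D$.

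Next I would check that $R_D$ is a ray, which amounts to two things: that it is totally ordered and that it has limit order type. For the total ordering, given $s,s'\in R_D$ I pick $d,d'\in D$ with $s\leq d$ and $s'\leq d'$; since $D$ is totally ordered we may assume $d\leq d'$, so that both $s$ and $s'$ lie in $\lceil d'\rceil$, which is well-ordered (in particular totally ordered) by the tree axiom. Hence $s$ and $s'$ are comparable, and $R_D$ is a path. For the limit order type, I would argue by contradiction: were $R_D$ to have a $\leq$-maximum $m$, then $m\leq d$ for some $d\in D\subseteq R_D$, forcing $m=d$ by maximality of $m$; but then $d'\leq m=d$ for every $d'\in D$, making $d$ a $\leq$-maximum of $D$ and contradicting the assumption that $D$ has limit order type. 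Thus $R_D$ has no maximum and is a genuine ray.

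Minimality and uniqueness would then come essentially for free. Any ray $R$ containing $D$ is by definition down-closed, so for each $s\in R_D$, choosing $d\in D\subseteq R$ with $s\leq d$ yields $s\in R$; hence $R_D\subseteq R$. This exhibits $R_D$ as the $\subseteq$-minimum among all rays containing $D$, which is in particular the unique $\subseteq$-minimal such ray. Finally, cofinality of $D$ in $R_D$ is a direct restatement of the construction: every $s\in R_D$ satisfies $s\leq d$ for some $d\in D$, which is exactly the defining condition for $D$ to be cofinal in $R_D$.

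I expect the only genuinely non-formal step to be the verification that $R_D$ is totally ordered, since this is the single point where the order-tree hypothesis is essential: it is precisely the well-ordering of each $\lceil t\rceil$ that rules out the branching behaviour which could otherwise make a union of initial segments fail to be a chain. The remaining steps are bookkeeping with down-closedness and with the limit order type of $D$.
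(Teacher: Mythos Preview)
Your proof is correct and follows essentially the same route as the paper: both set $R_D=\bigcup_{d\in D}\lceil d\rceil$, verify it is a down-closed chain of limit order type in which $D$ is cofinal, and observe that any down-closed set containing $D$ must contain $R_D$. The only cosmetic difference is that the paper phrases the total-ordering step as ``$\lceil s\rceil\subseteq\lceil t\rceil$ or $\lceil t\rceil\subseteq\lceil s\rceil$ for $s,t\in D$'', whereas you push two arbitrary elements of $R_D$ into a common $\lceil d'\rceil$; the content is identical.
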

\begin{proof}
	Consider $R_D:=\bigcup_{t\in D}\lceil t\rceil$, so that this is a down-closed subset of $T$ by being the union of other down-closed sets. Similarly, $D$ is cofinal in $R_D$ by its own definition. In addition, any ray $R$ in $T$ containing $D$ must also contain $R_D$, once $R$ is down-closed as well. For any pair $s,t\in D$, the fact that $D$ is totally ordered now implies that $\lceil s \rceil \subseteq \lceil t\rceil $ or $\lceil t\rceil \subseteq \lceil s \rceil$, depending on whether $s<t$ or $t<s$. In other words, $R_D$ is also a totally ordered subset of $T$ and, thus, defines now a path in $T$. Showing that it is actually a ray, $R_D$ has limit order-type regarding $\leq$ because so does $D$ (which is cofinal in $R_D$). 
\end{proof}

As a central idea within this section, we shall now formalize how some order-isomorphic copies of $\omega$ can be pruned from an order tree $T$ without loss of major structural and topological properties. To that aim, we may say that $t\in T\setminus \{t_0\}$ is a \textbf{tail node} of a ray $R \in \mathcal{R}(T)$ if $X_R^t:=R\setminus \lceil t\rceil$ has order type $\omega$ and
 $X_R^t\cup \{t\}$ contains no branching node of $T$. Below, we highlight some first consequences of this definition:
 
 \begin{lemma}\label{TailNode}
 	Let $(T,\leq)$ be an order tree and $t\in T$ be a tail node of a ray $R\in \mathcal{R}(T)$. Then,
 	\begin{itemize}
 		\item[$a)$] every other element from $X_R^t= R\setminus \lceil t\rceil $ is also a tail node of $R$;
 		\item[$b)$] $R\subseteq R'$ for every ray $R'\in \mathcal{R}(T)$ containing $t$;
 		\item[$c)$] no node from $X_R^t$ has limit height in $T$.  
 	\end{itemize}
 \end{lemma}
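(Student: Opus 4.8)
The plan is to reduce everything to the simple combinatorial picture that the two defining clauses force on the final segment of $R$ above $t$. First I would record that, being a tail node of $R$, the node $t$ is in particular one of its nodes, so that $\lceil t\rceil\subseteq R$ and $X_R^t=\{s\in R:s>t\}$ is precisely the final segment of $R$ strictly above $t$. Since this set has order type $\omega$, I can write the final segment $\{s\in R:s\ge t\}$ as a strictly increasing $\omega$-sequence $t=r_0<r_1<r_2<\cdots$, with $X_R^t=\{r_i:i\ge 1\}$. The one structural fact I would extract and use throughout is: for each $i$, $r_{i+1}$ is the \emph{unique} successor of $r_i$ in $T$, and consequently every node $u>r_i$ satisfies $u\ge r_{i+1}$. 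Indeed, any $u$ with $r_i<u<r_{i+1}$ would lie in $\lceil r_{i+1}\rceil\subseteq R$ and contradict the consecutiveness of $r_i,r_{i+1}$ in $R$, so $r_{i+1}$ is a successor of $r_i$; as $r_i\in X_R^t\cup\{t\}$ is not a branching node it is the only one, whence for any $u>r_i$ the $\le$-least element of $\{v\in\lceil u\rceil:v>r_i\}$ is a successor of $r_i$, hence equals $r_{i+1}$, giving $r_{i+1}\le u$.

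Granting this, parts $a)$ and $c)$ become short. For $a)$, fix $s=r_k\in X_R^t$ with $k\ge 1$; then $s\ne t_0$ since $s>t>t_0$, and $X_R^s=\{v\in R:v>r_k\}=\{r_i:i>k\}$ again has order type $\omega$, while $X_R^s\cup\{s\}=\{r_i:i\ge k\}$ is contained in $X_R^t\cup\{t\}$ and therefore contains no branching node. Hence $s$ satisfies both clauses and is a tail node of $R$. For $c)$, the same enumeration shows that for each $s=r_k$ with $k\ge 1$ the set $\mathring{\lceil r_k\rceil}=\{v\in R:v<r_k\}$ has $r_{k-1}$ as a $\le$-maximum; a nonempty well-ordered set with a greatest element has successor order type, so $\mathrm{ht}(r_k)$ is a successor ordinal and $r_k$ does not have limit height.

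The substantive part is $b)$, and this is where I expect the real work. Let $R'\in\mathcal{R}(T)$ be any ray with $t\in R'$; I would prove by induction on $i$ that $r_i\in R'$. The base case $r_0=t\in R'$ is the hypothesis. For the step, assuming $r_i\in R'$, I use that $R'$ is a ray and hence has no $\le$-maximum to produce some $u\in R'$ with $u>r_i$; the structural fact then forces $u\ge r_{i+1}$, and since $R'$ is down-closed this yields $r_{i+1}\in\lceil u\rceil\subseteq R'$. Thus $\{r_i:i\ge 1\}\subseteq R'$, and together with $\lceil t\rceil\subseteq R'$ (down-closure from $t\in R'$) this gives $R=\lceil t\rceil\cup X_R^t\subseteq R'$. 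The point to handle with care is exactly the interplay between "$R'$ has no maximum" and the non-branching hypothesis: non-branching only says the path above $t$ is forced one step at a time, so at each stage of the induction one still needs the limit order type of $R'$ to guarantee that $R'$ genuinely climbs past $r_i$ rather than terminating there, which is what feeds the structural fact into the next step.
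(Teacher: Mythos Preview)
Your proof is correct and follows essentially the same approach as the paper: both enumerate $X_R^t\cup\{t\}$ as an $\omega$-sequence, extract that each term is the unique successor of the previous one, and read off $a)$, $b)$, $c)$ from this structural fact. The only minor difference is in $c)$: the paper derives it by contradiction from $b)$ (if some $s\in X_R^t$ had limit height, then $R':=\mathring{\lceil s\rceil}$ would be a ray containing $t$, forcing $R\subseteq R'$ and hence $s\in R'$), whereas you argue directly that each $r_k$ has $r_{k-1}$ as an immediate predecessor and therefore successor height.
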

\begin{proof}
	
	By definition of tail node, we may fix an enumeration of order-type $\omega$ for $X_R^t$ as $t_1<t_2<\dots$, so that $t_{n+1}$ is a successor of $t_n$ for each $n\geq 1$ because $R$ is down-closed. In addition, $R\setminus \lceil t_n\rceil = \{t_{n+1}<t_{n+2}<\dots\}$ has also order-type $\omega$ and, being a subset of $X_R^t$, contains no branching nodes of $T$. In other words, proving item $a)$, each $t_n$ is also a tail node of $R$. On the other hand, note that $t_1$ is a successor of $t$ also because $R$ is down-closed and contains this later node. In particular, $t_1\in R'$ for every other ray $R'\in\mathcal{R}(T)$ containing $t$, which, as a tail node, has no successor in $T$ other than $t_1$. Actually, if $t_n \in R'$ for some $n\geq 1$, then $t_{n+1}\in R'$ because $R'$ has no $\leq-$maximum element and $t_{n+1}$ is the unique successor in $T$ of $t_n$ (which is not a branching node of $T$ by belonging to $X_R^t$). Concluding item $b)$, this proves the inclusion $R\subseteq R'$ by induction on $n\in\mathbb{N}$.

	Finally, suppose for a contradiction that some $s\in X_R^t$ has limit height in $T$, thus defining the ray $R':=\mathring{\lceil s\rceil}$. Hence, $t\in R'$ because, equivalently, $s \in X_R^t = R\setminus \lceil t\rceil$ is greater than $t$ regarding $\leq$. In this case, the previous paragraph just asserted that $R\subseteq R'$, contradicting the own definition of $R'$ as a ray not containing $s$. Therefore, item $c)$ must also hold.
	
\end{proof}

As a particular consequence of item $b)$ from the above result, note that some $t\in T$ is a tail node of \textit{at most} one ray $R_t$ of the given order tree $T$. Indeed, $R_t$ is characterized in this case as the $\subseteq-$minimal element from $\mathcal{R}(T)$ containing $t$. Considering this observation (and notation), we now formalize the following construction:

\begin{lemma}\label{Podando}
	Let $(T,\leq)$ be an order tree rooted at some $t_0\in T$. Fix also a subset $K\subseteq T\setminus \{t_0\}$ comprising tail nodes in $T$ such that $R_t\neq R_s$ if $s,t\in K$ are distinct. Now, consider the tree $\hat{T}$ obtained from $T$ after restricting its ordering $\leq$ to the set of nodes $T\setminus \bigcup_{t\in K}X_{R_t}^t$. Then, the following properties are verified:
	\begin{itemize}
		\item[$a)$] Nodes of $T$ which are branching or have limit height belong to $\hat{T}$;
		\item[$b)$] The map $\psi: \mathcal{R}(T)\to \mathcal{P}(\hat{T})$ defined by $\psi(R) = R\cap \hat{T}$ for every $R\in \mathcal{R}(T)$ is an homeomorphism onto its image, which comprises the disjoint union $\mathcal{R}(\hat{T})\cup \{\lceil t\rceil_{\hat{T}} : t\in K\}$;
		\item[$c)$] Each ray $\hat{R}\in \mathcal{R}(\hat{T})$ is cofinal in $\psi^{-1}(\hat{R})$ and contains infinitely many branching nodes of $T$ or infinitely many nodes of limit height in this later tree.
	\end{itemize}
\end{lemma}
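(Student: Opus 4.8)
The plan is to dispatch the three items in order, leaning on the tail-node machinery of Lemmas \ref{InclusionwiseMinimalRay} and \ref{TailNode}. Item $a)$ is immediate: the deleted set $\bigcup_{t\in K}X_{R_t}^t$ contains, by the very definition of a tail node, no branching node of $T$, and by Lemma \ref{TailNode}$c)$ no node of limit height; hence every branching node and every limit-height node survives into $\hat T$. Before the rest I would record two preliminaries. First, $\hat T$ is again an order tree rooted at $t_0$, since for $t\in\hat T$ the set $\lceil t\rceil_{\hat T}=\lceil t\rceil_T\cap\hat T$ is a subset of the well-ordered $\lceil t\rceil_T$ with maximum $t$, hence well-ordered. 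I would emphasize here that $\hat T$ is generally \emph{not} down-closed in $T$: a top of some $R_t$ has limit height, so by $a)$ it remains, and in $\hat T$ it is promoted to a successor of the tail node $t$, the intervening thread $X_{R_t}^t$ having been excised. Second, $\psi(R)=R\cap\hat T$ is always a path of $\hat T$, being totally ordered and down-closed for the restricted order.

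For $b)$ the mechanism is that whenever a ray $R\in\mathcal R(T)$ meets the thread $X_{R_t}^t$, Lemma \ref{TailNode}$b)$ forces $R_t\subseteq R$, so $R$ swallows the whole thread, which $\psi$ then simply deletes. I would first compute $\psi(R_t)=R_t\cap\hat T=\lceil t\rceil_T\cap\hat T=\lceil t\rceil_{\hat T}$ (the only part of $R_t$ removed above $t$ is $X_{R_t}^t$ itself), a path with maximum $t$. Then comes the dichotomy: if $\psi(R)$ has a maximum $m$, choose $w\in R$ with $w>m$; then $w$ lies in some deleted $X_{R_s}^s$, so $R_s\subseteq R$, and were $R_s\subsetneq R$ the minimal node of $R\setminus R_s$ would be a limit-height top of $R_s$, hence retained in $\hat T$ by $a)$ and lying above $m$, a contradiction; thus $R=R_s$ and $\psi(R)=\lceil s\rceil_{\hat T}$. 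Otherwise $\psi(R)$ has no maximum and, being down-closed and well-ordered, is a ray of $\hat T$. This pins the image as the disjoint union $\mathcal R(\hat T)\cup\{\lceil t\rceil_{\hat T}:t\in K\}$ (maxima versus rays; distinct $t$ give distinct maxima). Surjectivity onto $\mathcal R(\hat T)$ follows by feeding a ray $\hat R$ into Lemma \ref{InclusionwiseMinimalRay} to get the $\subseteq$-minimal ray $R_{\hat R}\supseteq\hat R$ of $T$ and checking $R_{\hat R}\cap\hat T=\hat R$, since the nodes $R_{\hat R}$ adds to $\hat R$ only fill excised threads. Injectivity is where item $a)$ does the essential work: in the no-maximum case two preimages $R,R'$ both contain $\hat R$, hence $R_{\hat R}$, and any proper extension would manufacture a limit-height top of $R_{\hat R}$ sitting in $\hat T\cap R$ above all of $\hat R$ — impossible — so $R=R_{\hat R}=R'$.

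To upgrade this bijection onto $Y:=\mathcal R(\hat T)\cup\{\lceil t\rceil_{\hat T}:t\in K\}$ to a homeomorphism, note that $\psi$ is the restriction to $\mathcal R(T)$ of the continuous coordinate-projection $2^T\to 2^{\hat T}$, $A\mapsto A\cap\hat T$, hence continuous. Since $\psi$ is a bijection onto $Y$ it commutes with finite intersections, so to show it is open it suffices to send each subbasic clopen $\{R:t\in R\}$ of $\mathcal R(T)$ to a relatively clopen subset of $Y$. For $t\in\hat T$ this image is $Y\cap[t]_{\hat T}$; for $t\in X_{R_s}^s$ the translation $t\in R\Leftrightarrow R_s\subseteq R\Leftrightarrow s\in R\Leftrightarrow s\in\psi(R)$ (both directions from Lemma \ref{TailNode}$b)$, using $K\subseteq\hat T$) makes the image $Y\cap[s]_{\hat T}$, with complements behaving dually. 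As each $[\cdot]_{\hat T}$ is clopen, $\psi$ is open, finishing $b)$.

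Finally, for $c)$ the cofinality of $\hat R$ in $\psi^{-1}(\hat R)=R_{\hat R}$ is exactly the ``moreover'' of Lemma \ref{InclusionwiseMinimalRay}. For the branching/limit alternative I would argue contrapositively: if a ray $\hat R$ of $\hat T$ met only finitely many branching nodes and finitely many limit-height nodes of $T$, then a final segment of $R_{\hat R}$ is an $\omega$-chain of non-branching, non-limit nodes, exhibiting a tail node $t$ with $R_t=R_{\hat R}$; the choice of $K$ (selecting a tail node from every ray admitting one) then yields $s\in K$ with $R_s=R_{\hat R}$, whence $\psi(R_{\hat R})=\lceil s\rceil_{\hat T}$ has a maximum, contradicting that $\hat R$ is a ray. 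The parts I expect to be delicate are the injectivity in $b)$, which genuinely needs $a)$ to guarantee that the limit-height tops separating competing rays are never pruned, and the bookkeeping forced by $\hat T$ failing to be down-closed in $T$; the clopen-subbase computation for the homeomorphism is short only because the thread-to-tail-node translation is available. In $c)$ the substantive move is the extraction of the tail node and the linkage of its ray back to $K$.
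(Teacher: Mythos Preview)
Your proof is correct and follows essentially the same route as the paper: item $a)$ is dispatched identically, the image/injectivity analysis in $b)$ mirrors the paper's two-case split (you phrase it as ``$\psi(R)$ has a maximum or not'' where the paper phrases it as ``$R=R_t$ for some $t\in K$ or not''), and your openness argument via the subbase $\{[t],\mathcal{R}(T)\setminus[t]\}$ together with bijectivity is a mild streamlining of the paper's direct computation of $\psi([t,F]_T)$. In $c)$ you correctly flag, parenthetically, that the argument needs $K$ to contain a tail node from every ray that admits one; the paper's proof uses this same assumption silently at the step ``thus we may write $R=R_s$ for some $s\in K$'', and indeed item $c)$ as literally stated fails for, say, $T=\omega$ with $K=\emptyset$ --- so your caution there is warranted rather than a defect.
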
 
\begin{proof}
    First, note that item $a)$ holds because, by definition of tail nodes, $X_{R_t}^t$ does not contain branching nodes or nodes of limit height in $T$ for every $t\in K$. In addition, once the ordering of $\hat{T}$ corresponds to a restriction of the ordering of $T$ to a set of nodes containing $t_0$, it clearly follows that $\hat{T}$ is also a tree rooted at $t_0$. More than that, since rays are down-closed and totally ordered subsets of $T$, the intersection $T\cap R$ indeed defines a path of $\hat{T}$ for every ray $R\in \mathcal{R}(T)$. In other words, $\psi : \mathcal{R}(T)\to \mathcal{P}(\hat{T})$ as introduced by item $b)$ is a well-defined map. For each ray $R\in \mathcal{R}(T)$, we now describe $\psi(R)$ according to one of the following cases:
    
    \begin{itemize}
    	\item If $R\in \mathcal{R}(T)$ cannot be written as $R_t$ for any $t\in K$, then the first two bullet points in the definition of tail nodes claim that for each $t \in R\cap K$ there is $s \in R\setminus \lceil t\rceil_T$ which is either a top of a ray (thus having limit height in $T$) or a branching node of $T$. In both case, we must have $s\in \hat{T}$ by item $a)$. Therefore, $\psi(R) = R\cap \hat{T}$ has limit order type and, thus, $\psi(R)\in \mathcal{R}(\hat{T})$. Conversely, if $\hat{R}\in \mathcal{R}(\hat{T})$ is a ray of $\hat{T}$, it is clearly also a totally ordered subset of $T$ having limit order type. Then, Lemma \ref{InclusionwiseMinimalRay} claims that there is an unique ray $R\in \mathcal{R}(T)$ which is inclusionwise minimal for the property of containing $\hat{R}$. The same result also shows that $\hat{R}$ is cofinal in $R$, from where we conclude the equalities $\hat{R}= R\cap \hat{T} = \psi(R)$. In particular, if $R,S\in \mathcal{R}(T)$ are rays such that $\psi(R) = \psi(S)$, the inclusionwise minimality of both $R$ and $S$ for containing $R\cap \hat{T} = S\cap \hat{T}$ implies that $R=S$;   
    	\item If $R\in \mathcal{R}(T)$ can be written as $R=R_t$ for some $t\in K$, then there is no $s\in R\cap \hat{T}$ for which $s> t_R$. After all, $X_{R_t}^t=R\setminus \lceil t\rceil_T$ is disjoint from $\hat{T}$ by the own definition of this later tree. Therefore, $t$ is the $\leq-$maximum element from $R\cap \hat{T}$ or, equivalently, $\psi(R) = R\cap \hat{T} = \lceil t\rceil_{\hat{T}}$. In particular, if $R_t$ and $R_s$ are rays admitting distinct tail nodes $s,t\in K$, then $\psi(R_t)= \lceil t\rceil_{\hat{T}} \neq \lceil s\rceil_{\hat{T}} = \psi(R_s)$. 
    \end{itemize}

To summarize, the above two items claim that $\psi: \mathcal{R}(T)\to \mathcal{P}(\hat{T})$ is an injective map whose range is given by $\mathcal{K}:=\mathcal{R}(\hat{T})\cup \{\lceil t\rceil\cap \hat{T}: t\in K\}$. On the other hand, in order to indeed conclude item $b)$, recall that a sub-basic open set in $\mathcal{P}(\hat{T})$ may be written as $[t]_{\hat{T}}$ or $\mathcal{P}(\hat{T})\setminus [t]_{\hat{T}}$ for some $t\in \hat{T}$. Naturally, for each ray $R\in \mathcal{R}(T)$, we then have $\psi(R)\in [t]_{\hat{T}}$ if, and only if, $t\in R$. Since $[t,\emptyset]_T:=\{R\in \mathcal{R}(T): t\in R\}$ and $\mathcal{R}(T)\setminus [t,\emptyset]_T$ are both open sets for the ray space topology of $\mathcal{R}(T)$, as highlighted before Theorem \ref{RepresentacaoPitz}, this previous equivalence shows that $\psi$ is continuous.

Conversely, a basic open set for the topology of $\mathcal{R}(T)$ around a ray $R$ is again written as $[t,F]_T$ for some $t\in R$ and some set $F\subseteq T$ comprising finitely many tops of $R$. In particular, $F\subseteq \hat{T}$ by property item $a)$, once tops of rays have limit height in $T$. If $t\in X_{R_{t'}}^{t'}$ for some $t'\in K$, we recall from item $b)$ in Lemma \ref{TailNode}  that a ray $R'\in \mathcal{R}(T)$ contains $t$ if, and only if, it also contains $R_{t'}$ (and thus $t'$). In other words, since $t'\in \hat{T}$, we can describe the open set $[t,F]_T$ while assuming that $t$ belongs to $\hat{T}=T\setminus \bigcup_{t'\in K}X_{R_{t'}}^{t'}$. In this case,

\begin{align*}
	\psi([t,F]_T) & = \{\psi(P): P\in \mathcal{R}(T)\text{ such that } t\in P\text{ and } s\notin P\text{ for every }s\in F\} \\ & = \{P'\in \mathcal{K}: t\in P'\text{ but }s\notin P'\text{ for every }s\in F\}\\ & = \left([t]_{\hat{T}}\setminus \bigcup_{s\in F}[s]_{\hat{T}}\right)\cap \mathcal{K}.
\end{align*}

 Once $[t]_{\hat{T}}\cup \{[s]_{\hat{T}}: s \in F\}$ is a collection of finitely many clopen sets for the topology of $\mathcal{P}(T_K)$, it follows that $\psi([t,F]_T)$ is then open in the subspace topology of $\mathcal{K}$. This concludes that $\psi: \mathcal{R}(T)\to \mathcal{K}$ is an homeomorphism.

Finally, in order verify item $c)$, fix a ray $\hat{R}\in \mathcal{R}(\hat{T})$ and denote $R = \psi^{-1}(\hat{R})\in \mathcal{R}(T)$. Suppose for instance that there is $t\in \hat{R}$ such that $\hat{R}\setminus \lceil t\rceil$ contains no branching nodes of $T$ nor vertices of limit height in this later tree. Since $\hat{R} = R\cap T$, it follows from item $a)$ that $R\setminus \lceil t\rceil_T$ itself contains no branching nodes nor nodes of limit height in $T$. In other words, $t$ is a tail node of $R$ and, thus, we may write $R = R_s$ for some $s\in K$. In this case, $\psi(R) = \lceil s\rceil_{\hat{T}}$ by the previous second bullet point, contradicting the choice of $\hat{R} = \psi(R)$ as a ray in $\hat{T}$. Hence, the given node $t\in \hat{R}$ must not exist, which particularly concludes that $\hat{R}$ contains infinitely many branching nodes of $T$ or infinitely many nodes of limit height within this later tree. Then, since $\hat{R}$ is cofinal in $R$ by the previous first bullet point, the proof of item $c)$ is also complete.

\end{proof}

As motivated at the beginning of this section, we can now briefly revisit a construction due to Pitz in \cite{caracterizacao} which supports the proof his Theorem 3.9. To that aim, let $(T,\leq)$ be a given order tree and fix a subspace $\mathcal{K}\subseteq \mathcal{P}(T)$ containing $\mathcal{R}(T)$. We shall construct a second order tree $T_{\mathcal{K}}$ as an extension of $(T,\leq)$ such that $\mathcal{R}(T_{\mathcal{K}})\simeq \mathcal{K}$. If $P$ is a path from $ \mathcal{K}':=\mathcal{K}\setminus \mathcal{R}(T)$, denote first $t_P:=\max P$ and fix an order-isomorphic copy of $\omega$ (disjoint from $T$) written as $\mathbb{N}_P:=\{t_P^0<t_P^1<t_P^2<\dots\}$. Then, $T\cup \bigcup_{P\in\mathcal{K}'}\mathbb{N}_P$ defines $T_{\mathcal{K}}$ as a set. In this case, the ordering $\leq$ of $T$ is extended to $T_{\mathcal{K}}$ according to the following items:

\begin{itemize}
	\item Given $P\in \mathcal{K}'$, we set $s< t_P^n$ for each $s\in P$ and each $n\in\mathbb{N}$. In particular, $t_P^0$ is a successor of $t_P$ and $t_{P}^{n+1}$ is a successor of $t_P^n$ for every $n\in\mathbb{N}$;
	\item Given $P\in \mathcal{K}'$ and $t\in T$ such that $t>t_P$, we set $t>t_P^n$ for each $n\in\mathbb{N}$.  
\end{itemize}

As an immediate consequence of the above definition, the root $t_0$ of $T$ is also the root of $T_{\mathcal{K}}$. In addition, for each $P\in \mathcal{K}'$ and $n\geq 1$, the nodes $t_{P}^{0}$ and $t_P^n$ are the unique successors of $t_{P}$ and $t_P^{n-1}$ in $T_{\mathcal{K}}$ respectively. In particular, this shows that branching nodes of $T_{\mathcal{K}}$ belong to $T$, as well as those nodes of $T_{\mathcal{K}}$ whose height is a limit ordinal. In other words, $t_P$ is a tail node of the unique $\subseteq-$minimal ray $R_P$ in $T_{\mathcal{K}}$ containing $\mathbb{N}_P$, as formalized by Lemma \ref{InclusionwiseMinimalRay}. After all, the ``moreover-''part of this result ensures that $\mathbb{N}_P=R_P\setminus \lceil t_P\rceil_{T_{\mathcal{K}}}$. Hence, since $T$ may be recovered from  $T_{\mathcal{K}}$ after restricting its order to the set $T_{\mathcal{K}}\setminus \bigcup_{P\in \mathcal{K}'}\mathbb{N}_P$, we finish this section by highlighting the following application of Lemma \ref{Podando}:

\begin{corol}
	Let $(T,\leq)$ be an order tree and fix a subspace $\mathcal{K}\subseteq \mathcal{P}(T)$ containing the rays of $T$. Then, $\mathcal{R}(T_{\mathcal{K}})$ is homeomorphic to $\mathcal{K}$. 
\end{corol}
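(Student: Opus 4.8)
The plan is to read the corollary off Lemma \ref{Podando}, applied to the order tree $T_{\mathcal{K}}$ together with the set $K:=\{t_P : P\in\mathcal{K}'\}$ of maxima of the finite paths in $\mathcal{K}'$. The construction preceding the statement already carries out the combinatorial work: for each $P\in\mathcal{K}'$ the node $t_P=\max P$ is a tail node of $T_{\mathcal{K}}$, its associated minimal ray is $R_{t_P}=R_P$, and $X_{R_P}^{t_P}=R_P\setminus\lceil t_P\rceil_{T_{\mathcal{K}}}=\mathbb{N}_P$. Consequently $\bigcup_{t\in K}X_{R_t}^{t}=\bigcup_{P\in\mathcal{K}'}\mathbb{N}_P$, and restricting the order of $T_{\mathcal{K}}$ to the complement of this union returns exactly $T$; that is, $\widehat{T_{\mathcal{K}}}=T$ in the notation of Lemma \ref{Podando}.

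Before invoking that lemma I would check its hypotheses on $K$, the key point being that the map $P\mapsto t_P$ is injective and that the rays $R_P$ are pairwise distinct. Both follow from the elementary fact that a finite path of an order tree is the down-closure of its maximum, so that $P=\lceil t_P\rceil_T$; combined with $R_P\cap T=\lceil t_P\rceil_T=P$ (since $R_P\setminus\lceil t_P\rceil=\mathbb{N}_P$ consists of fresh nodes disjoint from $T$), an equality $R_P=R_Q$ forces $P=R_P\cap T=R_Q\cap T=Q$. Thus $K$ indeed indexes pairwise distinct tail nodes with pairwise distinct minimal rays, as Lemma \ref{Podando} demands.

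With these checks in place, Lemma \ref{Podando}$(b)$ yields that $\psi:\mathcal{R}(T_{\mathcal{K}})\to\mathcal{P}(\widehat{T_{\mathcal{K}}})=\mathcal{P}(T)$, $R\mapsto R\cap T$, is a homeomorphism onto $\mathcal{R}(T)\cup\{\lceil t_P\rceil_{T}:P\in\mathcal{K}'\}$, both sets carrying the subspace topology inherited from $\mathcal{P}(T)$. It then remains only to recognize this image as $\mathcal{K}$: using $\lceil t_P\rceil_T=P$ once more together with the hypothesis $\mathcal{R}(T)\subseteq\mathcal{K}$, the image equals $\mathcal{R}(T)\cup\mathcal{K}'=\mathcal{K}$. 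Composing $\psi$ with this identification delivers the asserted homeomorphism $\mathcal{R}(T_{\mathcal{K}})\simeq\mathcal{K}$.

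The main obstacle is not conceptual but a matter of confirming that every hypothesis of Lemma \ref{Podando} is genuinely met; the only delicate point is the treatment of the root, since Lemma \ref{Podando} forbids $t_0\in K$ whereas $t_0=t_P$ precisely when $\{t_0\}\in\mathcal{K}'$. In that boundary case I would note that $R_{\{t_0\}}$ is still the unique $\subseteq$-minimal ray containing $\mathbb{N}_{\{t_0\}}$ and that $t_0$ is never deleted by the pruning, so the argument underlying Lemma \ref{Podando} transfers verbatim; all remaining steps are then routine bookkeeping.
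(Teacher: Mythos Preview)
Your proof is correct and follows essentially the same approach as the paper: set $K=\{t_P:P\in\mathcal{K}'\}$, observe that pruning $T_{\mathcal{K}}$ along these tail nodes returns $T$, and read the homeomorphism $\psi:\mathcal{R}(T_{\mathcal{K}})\to\mathcal{R}(T)\cup\{\lceil t_P\rceil_T:P\in\mathcal{K}'\}=\mathcal{K}$ directly off Lemma~\ref{Podando}(b). Your explicit verification of the injectivity of $P\mapsto t_P$, the distinctness of the rays $R_P$, and the boundary case $\{t_0\}\in\mathcal{K}'$ is more careful than the paper's own argument, which leaves those checks implicit.
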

\begin{proof}[Revisited proof of Theorem 3.9 in \cite{caracterizacao}]
	For each $P\in \mathcal{K}':=\mathcal{K}\setminus \mathcal{R}(T)$, fix the node $t_P:=\max P$ and the order-isomorphic copy $\mathbb{N}_P$ of $\omega$ as in the construction of $T_{\mathcal{K}}$. Recall from the previous paragraph that $t_P$ is a tail node of the unique $\subseteq-$minimal ray $R_P$ in $T_{\mathcal{K}}$ containing it and, then, $\mathbb{N}_P=  R_{P}\setminus \lceil t_P\rceil_{T_{\mathcal{K}}}$. Therefore, $T$ may be given by $ T_{\mathcal{K}}\setminus \bigcup_{P\in \mathcal{K}'}(R_P\setminus \lceil t_P\rceil_{T_{\mathcal{K}}})$. On the other hand, once $P = \lceil t_P\rceil_T$ for every $P\in \mathcal{K}'$ and $\mathcal{R}(T)\subseteq \mathcal{K}$, we can write $\mathcal{K} = \mathcal{R}(T)\cup \mathcal{K}'=\mathcal{R}(T)\cup \{\lceil t_P\rceil_T: P \in \mathcal{K}'\}$. Due to this last expression, the statement is now verified by the homeomorphism $\psi : \mathcal{R}(T_{\mathcal{K}})\to \mathcal{K}\subseteq \mathcal{P}(T)$ as defined within item $b)$ of Lemma \ref{Podando}. 
\end{proof}

\section{Proof of the main result}\label{FinalProof}

\paragraph{}
This final section discusses how the results summarized so far throughout this paper support a short proof for Theorem \ref{main}, which states a topological description for the family $\Omega_E=\{\Omega_E(G): G \text{ graph}\}$. We shall split our approach into two intermediate characterizations, closely following the inspiring literature on end spaces. As a first step, the correspondence below may be read as a restriction of Theorem \ref{RepresentacaoPitz} to the study of edge-ends:

\begin{prop}\label{MinhaRepresentacao}
	A topological space $X$ is homeomorphic to the edge-end space of some graph if, and only if, it is homeomorphic to the ray space of an order tree $T$ whose height is bounded by $\omega\cdot \omega$ and whose branching rays contain only finitely many branching nodes.
\end{prop}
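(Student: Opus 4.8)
The plan is to establish both implications by shuttling the statement, via the dictionary assembled in Sections~\ref{sec:GraphTrees} and~\ref{sec:order}, between the representation results already available. Throughout I identify a graph-theoretical tree with the order tree of height at most $\omega$ it induces (the ``moreover'' part of Lemma~\ref{BasicPropertiesTree}), and I invoke Lemma~\ref{PathTopologies} to trade $\|T'\|$ for $\mathcal{P}(T')$ and $\Omega_E(T')$ for $\mathcal{R}(T')$ whenever convenient. The forward implication will pass from the edge-end space to a subspace of some $\mathcal{P}(T')$ and then to the ray space of $T'_{\mathcal{K}}$, while the converse will travel backwards through a pruning furnished by Lemma~\ref{Podando}.

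For the forward direction, suppose $X$ is an edge-end space. Theorem~\ref{RepresentacaoEdgeEndsPitz} supplies a graph-theoretical tree $T'$ and a subspace $K$ with $\Omega_E(T')\subseteq K\subseteq\|T'\|$ and $X\simeq K$; applying $\sigma$ from Lemma~\ref{PathTopologies} converts this into $\mathcal{R}(T')\subseteq\mathcal{K}\subseteq\mathcal{P}(T')$ with $X\simeq\mathcal{K}$, and the Corollary closing Section~\ref{sec:order} realizes $\mathcal{K}$ as $\mathcal{R}(T'_{\mathcal{K}})$. It then remains to verify the two combinatorial clauses for $T'_{\mathcal{K}}$. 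Since $T'$ has height at most $\omega$, every node has only finitely many predecessors inside $T'$, so the down-closure of any node of $T'_{\mathcal{K}}$ meets only finitely many of the inserted copies $\mathbb{N}_P$ of $\omega$; interleaving finitely many copies of $\omega$ with a finite set keeps each height strictly below $\omega\cdot\omega$, whence $\mathrm{ht}(T'_{\mathcal{K}})\le\omega\cdot\omega$. For the branching rays, recall that both the branching nodes and the limit-height nodes of $T'_{\mathcal{K}}$ lie in $T'$; hence the top $u$ of any branching ray $R=\mathring{\lceil u\rceil}$ is an original node of finite $T'$-height, so $\mathring{\lceil u\rceil}_{T'}$ is finite and already contains every branching node of $T'_{\mathcal{K}}$ that lies on $R$.

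For the converse, assume $X\simeq\mathcal{R}(T)$ with $T$ as in the statement. I aim to produce a graph-theoretical tree $\hat{T}$ together with a subspace $\mathcal{R}(\hat{T})\subseteq\mathcal{K}\subseteq\mathcal{P}(\hat{T})$ realizing $X$, after which Lemma~\ref{PathTopologies} and Theorem~\ref{RepresentacaoEdgeEndsPitz} identify $X$ as an edge-end space. I would obtain $\hat{T}$ from Lemma~\ref{Podando} by letting $K$ be the set of \emph{minimal} tail nodes: for every ray of $T$ that admits a tail node, pick the $\leq$-least one. Such a least node exists because the ray is well-ordered (and a ray admits a tail node only when it has finitely many branching nodes), and distinct rays contribute distinct elements of $K$ with distinct associated rays $R_t$, exactly as Lemma~\ref{Podando} demands. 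The homeomorphism $\psi$ then carries $\mathcal{R}(T)$ onto $\mathcal{K}:=\mathcal{R}(\hat{T})\cup\{\lceil t\rceil_{\hat{T}}:t\in K\}$, which contains $\mathcal{R}(\hat{T})$, so the whole converse reduces to showing that $\hat{T}$ has height at most $\omega$.

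This last point is the crux, and I expect it to be the main obstacle. Fixing a node $s$ surviving the pruning, I would stratify $\mathring{\lceil s\rceil}_T$ into the blocks $[\omega\cdot j,\omega\cdot(j+1))$: the bound $\omega\cdot\omega$ ensures that $s$ meets only finitely many blocks and that each block contains at most one node of limit height. Within a single block the hypothesis that branching rays carry only finitely many branching nodes confines the branching nodes to a finite set, so any infinite portion of the block is a non-branching run of order type $\omega$ --- a pure tail whose nodes are all tail nodes of one common minimal ray, of which therefore at most the minimal tail node survives. Counting, each block leaves only finitely many survivors (its finitely many branching nodes, its single limit node, and at most one tail node), and summing over the finitely many blocks shows $\mathring{\lceil s\rceil}_{\hat{T}}$ is finite, i.e.\ $\mathrm{ht}_{\hat{T}}(s)<\omega$. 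The delicate interaction to get right, already visible on a single branch of order type $\omega\cdot 2$, is that a minimal tail node may sit at a limit height of $T$ and is brought down to finite height only because $K$ also prunes the pure tails lying beneath it; verifying that this simultaneous pruning collapses every branch to finite height, while leaving the ray space unchanged up to homeomorphism, is the heart of the argument.
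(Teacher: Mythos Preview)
Your proposal is correct and follows essentially the same route as the paper. The forward direction is identical; for the converse, you make explicit the choice of $K$ in Lemma~\ref{Podando} (one minimal tail node per ray that admits one), which the paper leaves implicit but which is exactly what is needed for item~$c)$ of that lemma to apply. Where the paper then argues $\mathrm{ht}(\hat{T})\le\omega$ by contradiction via Lemma~\ref{Podando}$\,c)$ --- a ray of $\hat T$ with a top would force either infinitely many limit-height nodes (violating the height bound) or infinitely many branching nodes on a branching ray of $T$ (violating the hypothesis) --- you instead do a direct block-by-block count of survivors in $\mathring{\lceil s\rceil}_T$. Your parenthetical tally (``finitely many branching nodes, its single limit node, and at most one tail node'') undercounts slightly, since the non-branching nodes lying between the block's limit node and its last branching node also survive; but they are still finitely many, so the conclusion stands. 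Invoking Lemma~\ref{Podando}$\,c)$ directly, as the paper does, would spare you this bookkeeping.
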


\begin{proof}
	If $T$ is a graph-theoretical tree and $v_0\in V(T)$ is a fixed root, let $\leq$ denote the corresponding tree-order as recalled in Section \ref{sec:GraphTrees}. In this case, Lemma \ref{PathTopologies} describes an homeomorphism $\sigma : \mathcal{P}(T)\to \|T\|$ that particularly identifies $\mathcal{R}(T)$ and $\Omega_E(T)$. Hence, any subspace $K\subseteq \|T\|$ containing $\Omega_E(T)$ is homeomorphic to a subspace $\mathcal{K}\subseteq \mathcal{P}(T)$ containing $\mathcal{R}(T)$. Now, Section \ref{sec:order} provides a construction of an order tree $T_{\mathcal{K}}$ which extends the ordering of $T$ and whose ray space is homeomorphic to $\mathcal{K}$. Revisiting the definition of $T_{\mathcal{K}}$ more carefully, the properties below verify a first implication from the statement:
	
	\begin{itemize}
		\item If $v$ has limit height in $T_{\mathcal{K}}$, then it actually belongs to $T$. Nevertheless, $\lceil v \rceil_T$ contains finitely many vertices, since $T$ is a graph-theoretical tree. This implies that $\lceil v \rceil_{T_{\mathcal{K}}}$ contains only finitely many vertices of limit height in $T_{\mathcal{K}}$. If $u$ is such a vertex, the order-type of $\mathring{\lceil u\rceil}_{T_{\mathcal{K}}}$ (which is down-closed in $T_{\mathcal{K}}$) reads as $\omega\cdot n$ for some $n\in\mathbb{N}$. In other words, the height of $v$ in $T$ is strictly bounded by $\omega\cdot \omega$, from where we obtain that this ordinal is also an upper bound for the own height of $T_{\mathcal{K}}$;
		\item Similarly, if $R\in \mathcal{R}(T_{\mathcal{K}})$ has a top $v$, then $v\in V(T)$ because $v$ has limit height in $T_{\mathcal{K}}$. In addition, branching nodes of $T$ also belong to $T_{\mathcal{K}}$ by construction, so that $R$ must contain at most finitely many of them. Otherwise, they would define an infinite subset of $\lceil v\rceil_T$, contradicting again that the ordering $(T,\leq)$ arises from a graph-theoretical tree (in which, hence, $v$ has finite height).  
	\end{itemize}

	Conversely, let $T$ be now an order tree whose height is bounded by $\omega\cdot \omega$ and whose branching rays contain only finitely many branching nodes. Then, denote by $\hat{T}$ the order tree obtained from $T$ as described by Lemma \ref{Podando}, so that $\mathcal{R}(T)$ is homeomorphic to a subspace of $\mathcal{P}(\hat{T})$ containing $\mathcal{R}(\hat{T})$. Combining Theorem \ref{RepresentacaoEdgeEndsPitz} with Lemma \ref{PathTopologies}, the proof is finished after showing how $(\hat{T},\leq)$ arises as the ordering of a graph-theoretical tree. According to Lemma \ref{BasicPropertiesTree}, this is equivalent to verifying that the height of $\hat{T}$ is bounded by $\omega$.

	In fact, suppose for a contradiction that there is a node $t\in \hat{T}$ whose height in $\hat{T}$ is a limit ordinal. In this case, $\hat{R}:=\mathring{\lceil t\rceil}_{\hat{T}} $ defines a ray of $\hat{T}$ which admits $t$ as a top. Due to this last property, note that $\hat{R}$ cannot contain infinitely many nodes of limit height in $T$: after all, the height of $t$ in $T$ should be at least $\omega\cdot \omega$ otherwise. Since this contradicts the assumption of $\omega\cdot \omega$ as an upper bound for the own height of $T$, item $c)$ from Lemma \ref{Podando} claims that $\hat{R}$ contains infinitely many branching nodes from $T$. Nevertheless, item $b)$ in the same result also provides a ray $R\in \mathcal{R}(T)$ whose intersection with $\hat{T}$ is precisely $\hat{R}$. Since $\hat{R}$ is even cofinal in $R$, it follows that $t > s$ for every $s\in R$ and, thus, that $R$ is a branching ray of $T$. In fact, $t$ is actually a top of $R$ in $T$ by item $a)$ within Lemma \ref{Podando} and since $R\cap \tilde{T} = \hat{R} = \mathring{\lceil t\rceil}_{\hat{T}}$. However, $R$ now contradicts the assumption that $T$ admits no branching ray containing infinitely many branching nodes. Therefore, no node has limit height in $(\hat{T},\leq)$ and, equivalently, the own height of $\hat{T}$ is bounded by $\omega$.          
\end{proof}

In its turn, the next result sets an instance of Theorem \ref{caracterizacaoPitz} that characterizes the family of ray spaces just highlighted by Proposition \ref{MinhaRepresentacao}. Therefore, together with this later statement, the correspondence below finishes the proof of our Theorem \ref{main} by asserting the claimed topological description for the family of edge-end spaces:  

\begin{prop}\label{CaracterizacaoDeFato}
	The following two items are equivalent for a given topological space $X$:
	\begin{itemize}
		\item[$i)$] $X$ is homeomorphic to the ray space of an order tree whose height is bounded by $\omega\cdot \omega$ and whose branching rays contain finitely many branching nodes;
		\item[$ii)$] $X$ admits a clopen subbase which is nested, noetherian, hereditarily complete and verifies the singleton intersection property. 
	\end{itemize}
\end{prop}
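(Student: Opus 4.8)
The plan is to derive both implications from Theorem \ref{caracterizacaoPitz} by reading off how the singleton intersection property translates into the two order-theoretic conditions on the witnessing tree. Theorem \ref{caracterizacaoPitz} already equates the underlying requirement of item $i)$ — being the ray space of \emph{some} order tree — with the existence of a nested, noetherian and hereditarily complete clopen subbase, and it supplies a $\mathcal{C}$-tree $T$ with associated $f\colon T\to\mathcal{C}$ and a homeomorphism $e\colon X\to\mathcal{R}(T)$ under which each $C=f(t)$ corresponds to $[t,\emptyset]_T=\{R\in\mathcal{R}(T):t\in R\}$. I would assume $f$ onto, so that $\mathcal{C}$ is carried exactly onto $\{[t,\emptyset]_T:t\in T\}$; in particular $f(t)\neq\emptyset$ forces every node of $T$ to lie on some ray. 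The whole proof then reduces to a single claim: \emph{the subbase $\{[t,\emptyset]_T:t\in T\}$ satisfies the singleton intersection property if and only if $T$ has height at most $\omega\cdot\omega$ and each of its branching rays contains finitely many branching nodes.} Granting this, direction $i)\Rightarrow ii)$ takes the subbase of Theorem \ref{caracterizacaoPitz} (already nested, noetherian, hereditarily complete) and adjoins the singleton property via the ``if'' part, while $ii)\Rightarrow i)$ feeds a subbase witnessing $ii)$ into Theorem \ref{caracterizacaoPitz} and applies the ``only if'' part to the resulting $T$.

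To set up the dictionary behind the claim, I would first argue that a strictly $\subseteq$-decreasing chain $[t_0,\emptyset]_T\supsetneq[t_1,\emptyset]_T\supsetneq\cdots$ is the same datum as a strictly increasing sequence $D=\{t_0<t_1<\cdots\}$ along which $f$ properly decreases: nestedness forces the $t_n$ pairwise comparable, and nonemptiness rules out the incomparable case. By Lemma \ref{InclusionwiseMinimalRay}, $D$ is cofinal in a unique minimal ray $R_D$, and intersecting the chain gives $\bigcap_n[t_n,\emptyset]_T=\{R\in\mathcal{R}(T):R\supseteq R_D\}$. Since $R_D$ itself is such a ray, this intersection is a singleton precisely when $R_D$ admits no proper ray extension; a short well-foundedness argument shows a proper extension $R\supsetneq R_D$ exists if and only if $R_D$ has a top $w$, i.e. $R_D=\mathring{\lceil w\rceil}$ is a branching ray, in which case the ray through $w$ guaranteed above already yields a second point. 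Thus the singleton property fails exactly when some branching ray arises as an $R_D$, equivalently when $f$ can decrease cofinally along a branching ray.

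The core analytic step is then to decide when $f$ decreases cofinally along a branching ray $R$ with top $w$. Here I would show that any ray $S$ witnessing $f(t)\supsetneq f(t')$ (so $t\in S$, $t'\notin S$, $t<t'$ on $R$) must diverge from $R$ below $w$ in one of two ways: either $S$ leaves $R$ at a \emph{branching node} lying in $\{s:t\leq s<t'\}$, or $S$ is an initial segment $\mathring{\lceil u\rceil}$ of $R$ for a \emph{limit-height node} $u$ with $t<u\leq t'$. Consequently, if $R$ contains only finitely many branching nodes and only finitely many limit-height nodes, then past a suitable $t_0\in R$ every ray through $t$ is either $R$ itself or passes through $w$, so $f$ is eventually constant along $R$ and no strictly decreasing chain is cofinal in it. The height hypothesis enters exactly here: a branching ray with top $w$ has finitely many limit-height nodes if and only if $w$ has height below $\omega\cdot\omega$, so the global bound on the height of $T$ is equivalent to ``every branching ray has finitely many limit-height nodes''. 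Combined with the finite-branching-node hypothesis, this gives the ``if'' part of the claim.

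For the ``only if'' part I would argue contrapositively, manufacturing a bad chain from a violation. If some branching ray has infinitely many branching nodes, their supremum is again a limit-height node $c$, so $\mathring{\lceil c\rceil}$ is a branching ray carrying branching nodes $b_0<b_1<\cdots$ cofinal in it; choosing at each $b_i$ a ray branching off along a foreign successor (possible since every node lies on a ray) produces a strictly decreasing chain whose $R_D=\mathring{\lceil c\rceil}$ has top $c$, hence non-singleton intersection. Symmetrically, a node of height $\geq\omega\cdot\omega$ sits above limit-height nodes $u_0<u_1<\cdots$ at heights $\omega,\omega\cdot2,\dots$, and the initial-segment rays $\mathring{\lceil u_{k+1}\rceil}$ witness a strictly decreasing chain along the relevant branching ray whose minimal ray again has a top. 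Either way the singleton property fails, completing the claim and the proposition. I expect the main obstacle to be this last analytic step — the precise ``branching off versus topping off'' description of how rays drop out of $f(t)$, together with the bookkeeping showing that the two finiteness phenomena (branching nodes and limit-height nodes) match respectively the finite-branching-node condition and the $\omega\cdot\omega$ height bound.
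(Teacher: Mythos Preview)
Your proposal is correct and follows essentially the same route as the paper: both directions rest on Theorem \ref{caracterizacaoPitz} together with the observation that a strictly decreasing chain $[t_0]\supsetneq[t_1]\supsetneq\cdots$ determines a minimal ray $R_D$ (Lemma \ref{InclusionwiseMinimalRay}) whose intersection is a singleton exactly when $R_D$ is non-branching, and that cofinal strict decrease along a branching ray is obstructed precisely by a tail free of branching nodes and of limit-height nodes---the paper phrases this last step via the tail-node machinery of Lemma \ref{TailNode}, while you argue it directly.

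Two minor points. First, you do not need $f$ onto: all that is required for $ii)\Rightarrow i)$ is that $\{[t,\emptyset]_T:t\in T\}$, which under $e$ corresponds to $\mathrm{image}(f)\subseteq\mathcal{C}$, inherits the singleton intersection property from $\mathcal{C}$; and the fact that every node of the $\mathcal{C}$-tree lies on a ray follows already from each $f(t)$ being nonempty. Second, in your dichotomy for a witnessing ray $S$, the limit case need only produce a limit-height node $u$ with $t<u\le t'$, not the equality $S=\mathring{\lceil u\rceil}$ (indeed $S$ may continue through a different top of $\mathring{\lceil u\rceil}$); but since your subsequent argument only uses the existence of such a $u$, nothing is lost.
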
     
\begin{proof}
	Suppose first that $X$ is homeomorphic to the ray space of an order tree $T = (T,\leq)$. Identifying $X$ with $\mathcal{R}(T)$, we recall from Section \ref{sec:background} that a clopen subbasic set in $X= \mathcal{R}(T)$ may be written as $[t]:=\{P\in \mathcal{R}(T): t\in P\}$ for some $t\in T$. In this case, the family $\mathcal{C}:=\{[t]: t\in T\}$ is noetherian and hereditarily complete by Propositions 2.10 and 2.13 in \cite{caracterizacao} respectively. Moreover, it is easily seen that this clopen subbase for $X$ is nested. In fact, since rays are down-closed and totally ordered subsets of $T$, we have $[t]\cap [s] = \emptyset$ if $t,s\in T$ are incomparable regarding $\leq$, while $[t]\subseteq [s]$ if $s\leq t$.

	In order to show that $\mathcal{C}$ verifies the singleton intersection property, let $\{t_n\}_{n\in\mathbb{N}}$ be a sequence of nodes of $T$ such that $[t_n]\supsetneq [t_{n+1}]$ for each $n\in\mathbb{N}$. In other words, $t_n > t_{n+1}$ for every such $n$ by the nestedness of $\mathcal{C}$, from where Lemma \ref{InclusionwiseMinimalRay} ensures the existence of a $\subseteq-$minimal ray $R\in \mathcal{R}(T)$ containing $\{t_n\}_{n\in\mathbb{N}}$. Then, $t_n\in R$ for every $n\in\mathbb{N}$ or, equivalently, $R\in \bigcap_{n\in\mathbb{N}}[t_n]$. For a contradiction, fix some $R'\in \bigcap_{n\in\mathbb{N}}[t_n]$ distinct from $R$, so that $R\subsetneq R'$ by the $\subseteq-$minimality of $R$. In this case, the node $\min R'\setminus R$ verifies that $R$ is a branching ray of $T$, which then contains only finitely many branching nodes by hypothesis. Hence, we may choose a large enough $n_0\in\mathbb{N}$ for which $R\setminus \lceil t_{n_0}\rceil$ contains no branching node. Similarly, once the order type of $R$ is not greater than $\omega\cdot \omega$ (once this ordinal is an upper bound for the own height of $T$ by assumption), only finitely many nodes of $R$ have limit height in $T$. In other words, $n_0$ can be chosen to be big enough so that $R\setminus \lceil t_{n_0}\rceil$ has order-type $\omega$. Hence, $t_n$ is a tail node of $R$ for each $n\geq n_0$ and, thus, $[t_n] = [t_{n_0}]$. Since this contradicts the choice of $\{[t_n]\}_{n\geq n_0}$ as a $\subseteq-$strictly decreasing sequence in $\mathcal{C}$, we must have $\bigcap_{n\in\mathbb{N}}[t_n] = \{R\}$. Therefore, finishing the proof of $i)\Rightarrow ii)$, the singleton intersection property holds for $\mathcal{C}$.

	Conversely, suppose that $X$ admits a clopen subbase $\mathcal{C}$ which is nested, noetherian and hereditarily complete as above. Then, by Theorem  \ref{caracterizacaoPitz}, there is a $\mathcal{C}-$tree $T = (T,\leq)$ whose associated map $f: T\to \mathcal{C}$ can be used to define an homeomorphism $e: X\to \mathcal{R}(T)$ after setting $e(x):=\{t\in T: x\in f(t)\}$ for every $x\in X$. Now assuming the singleton intersection property for $\mathcal{C}$, we will argue that the height of $T$ is bounded by $\omega\cdot \omega$ and that its branching nodes contains only finitely many branching vertices.

	For a contradiction, suppose that some node $t\in T$ has height $\omega \cdot \omega$ and fix the ray $R\in \mathcal{R}(T)$ given by $R = \mathring{\lceil t\rceil}$. In particular, for each $n\in\mathbb{N}$ there is a node $t_n \in R:=\mathring{\lceil t\rceil}$ whose height is $\omega\cdot n$, from where $t_0$ is then chosen as the root of $T$. On the other hand, each $t_n$ with $n\geq 1$ is a top of the ray $R_n:=\mathring{\lceil t_n\rceil}$, whose inverse image under $e$ then belongs to $f(t_{n-1})$ (since $t_{n-1}\in R_n$) but not to $f(t_n)$ (because $t_n\notin R_n$). Hence, $f(t_n)\supsetneq f(t_{n+1})$ for every $n\in\mathbb{N}$, so that $\bigcap_{n\in\mathbb{N}}f(t_n)$ comprises a single element $x\in X$ the singleton intersection property of $\mathcal{C}$. On one hand, we should have $e(x) = R$ due to the fact that $R = \mathring{\lceil t\rceil} $ is a ray of $T$ and $t_n \in R$ for every $n\in\mathbb{N}$. On the other hand, $f(t)\subseteq \bigcap_{n\in\mathbb{N}}f(t_n)$ by definition of $\mathcal{C}-$tree and because $t>t_n$ for every $n\in\mathbb{N}$. The uniqueness of the intersection $\bigcap_{n\in\mathbb{N}}f(t_n)$ then implies that $x\in f(t)$, since $f(t)$ is a non-empty set from $\mathcal{C}$. However, we should have $t\in e(x) = R$ by the law that sets $e$, contradicting the choice of $t$ as a top of $R$. Therefore, $T$ has no nodes of height $\omega\cdot \omega$ and, hence, this ordinal is an upper bound for the own height of $T$.

	Analogously, suppose that $t\in T$ is the top of a ray $R=\mathring{\lceil t\rceil}$ and that $\{t_n\}_{n\in\mathbb{N}}\subseteq R$ is a sequence comprising infinitely many branching nodes. Since $R$ is well-order by $\leq$, we may clearly assume that $t_n < t_{n+1}$ for every $n\in\mathbb{N}$, so that $f(t_{n})\supseteq f(t_{n+1})$ again by the definition of $\mathcal{C}-$tree. However, once $t_n$ is a branching node of $T$, there is a successor $s_n'$ of $t_n$ other than the one lying on $R$, which we may denote by $s_n$. Once $t_n<s_n\leq t_{n+1}$ and $t_n<s_n'$, it is also immediate from the definition of $\mathcal{C}-$tree that $f(t_n)\supseteq f(s_n)\supseteq f(t_{n+1})$ and $f(t_n)\supseteq f(s_n')$. On the other hand, $f(s_n)\cap f(s_n') = \emptyset$ because $s_n$ and $s_n'$ are incomparable regarding $\leq$ by being two different successors of $t_n$. Hence, $f(t_n)\supsetneq f(t_{n+1})$ since $f(s_n')$ is a non-empty clopen set from $\mathcal{C}$. Therefore, $\bigcap_{n\in\mathbb{N}}f(t_n)$ contains once more an unique element $x\in X$ by the singleton intersection property of $\mathcal{C}$. Again because $t$ is a top of $R$ and $\{t_n\}_{n\in\mathbb{N}}\subseteq R$, we should have $e(x) = R$  as well as $f(t)\subseteq \bigcap_{n\in\mathbb{N}}f(t_n)$, then concluding that $f(t) = \{x\}$. By definition of $e$, this contradicts the choice of $t$ as a node not belonging to $e(x) = R = \mathring{\lceil t\rceil}$. Therefore, finishing the proof of $ii)\Rightarrow i)$, the branching ray $R$ must contain only finitely many branching nodes of $T$.

\end{proof}

\section{Acknowledgments}
\paragraph{}

I acknowledge the São Paulo Research Foundation (FAPESP) for the financial support through grant number 2024/01158-1.

\bibliography{NoteEdgeEndSpaces}
\bibliographystyle{plain}

\end{document}